\newtheorem{theorem}{Theorem}[section]
\newtheorem{corollary}[theorem]{Corollary}
\newtheorem{proposition}[theorem]{Proposition}
\newtheorem{lemma}[theorem]{Lemma}
\newtheorem{question*}{Question}
\newtheorem{problem*}{Problem}
\theoremstyle{definition}
\newtheorem{example}[theorem]{Example}
\theoremstyle{remark}
\newtheorem*{remark}{Remark}
\theoremstyle{remark}
\numberwithin{equation}{section}
\crefname{figure}{Figure}{Figures}
\theoremstyle{plain}
\newtheorem*{theorem*}{Theorem}
\crefname{theorems}{Theorem}{Theorems}
\crefname{corollaries}{Corollary}{Corollaries}
\newtheorem*{corollary*}{Corollary}
\crefname{corollaries*}{Corollary}{Corollaries}
\crefname{lemma}{Lemma}{Lemmata}
\crefname{proposition}{Proposition}{Propositions}
\crefname{conjectures}{Conjecture}{Conjectures}
\newtheorem*{conjonjecture*}{Conjecture}
\crefname{conjonjectures*}{Conjecture}{Conjectures}
\crefname{definitions}{Definition}{Definitions}
\crefname{hypotheses}{Hypothesis}{Hypotheses}
\newcommand{\Z}{\mathbb{Z}}
\newcommand{\R}{\mathbb{R}}
\newcommand{\Q}{\mathbb{Q}}
\newcommand{\kd}{\mathfrak{d}}
\newcommand{\re}{\textup{Re}}
\newcommand{\im}{\textup{Im}}
\newcommand{\N}{\mathrm{N}}
\newcommand{\Gal}{\mathrm{Gal}}
\DeclareFontFamily{U}  {MnSymbolF}{}
\DeclareSymbolFont{symbolsMN}{U}{MnSymbolF}{m}{n}
\DeclareFontShape{U}{MnSymbolF}{m}{n}{
    <-6>  MnSymbolF5
   <6-7>  MnSymbolF6
   <7-8>  MnSymbolF7
   <8-9>  MnSymbolF8
   <9-10> MnSymbolF9
  <10-12> MnSymbolF10
  <12->   MnSymbolF12}{}
\DeclareFontShape{U}{MnSymbolF}{b}{n}{
    <-6>  MnSymbolF-Bold5
   <6-7>  MnSymbolF-Bold6
   <7-8>  MnSymbolF-Bold7
   <8-9>  MnSymbolF-Bold8
   <9-10> MnSymbolF-Bold9
  <10-12> MnSymbolF-Bold10
  <12->   MnSymbolF-Bold12}{}
\DeclareMathSymbol{\tbigtimes}{\mathop}{symbolsMN}{2}
\newcommand*{\bigtimes}{%
  \DOTSB
  \tbigtimes
  \slimits@ 
}
\renewcommand{\tilde}{\widetilde}
\renewcommand{\bar}{\overline}
\newcommand{\Class}{\mathrm{Cl}}
\renewcommand{\hat}{\widehat}
\renewcommand{\epsilon}{\varepsilon}
\newcommand{\kF}{\mathfrak{F}}
\renewcommand{\Im}{\mathrm{Im}}
\newcommand{\cO}{\mathcal{O}}
\newcommand{\kp}{\mathfrak{p}}
\renewcommand{\Re}{\mathrm{Re}}
\newcommand{\reg}{\mathrm{reg}}
\let\@wraptoccontribs\wraptoccontribs
\title{A zero density estimate for Dedekind zeta functions}
\author{Jesse Thorner}
\address{Department of Mathematics, University of Illinois, Urbana, IL 61801, USA}
\email{jesse.thorner@gmail.com}
\author{Asif Zaman}
\address{Department of Mathematics, University of Toronto, Toronto, Ontario, Canada M5S 2E4}
\email{zaman@math.toronto.edu}
\begin{document}

\begin{abstract}
Given a nontrivial finite group $G$, we prove the first zero density estimate for families of Dedekind zeta functions associated to Galois extensions $K/\Q$ with $\mathrm{Gal}(K/\Q)\cong G$ that does not rely on unproven progress towards the strong form of Artin's conjecture.  We use this to remove the hypothesis of the strong Artin conjecture from the work of Pierce, Turnage-Butterbaugh, and Wood on the average error in the Chebotarev density theorem and $\ell$-torsion in ideal class groups.
\end{abstract}

\maketitle

\section{Statement of the main result}

The study of zeros of $L$-functions associated to families of automorphic representations has stimulated much research over the last century.  Zero density estimates, which show that few $L$-functions within a family can have zeros near the edge of the critical strip, are especially useful. They often allow one to prove arithmetic results that would otherwise require unproven progress towards the generalized Riemann hypothesis (GRH) for the family. Classical triumphs include Linnik's log-free zero density estimate for $L$-functions associated to Dirichlet characters modulo $q$ \cite{Linnik} and the Bombieri--Vinogradov theorem for primes in arithmetic progressions \cite{Bombieri2}.  More recently, Kowalski and Michel \cite{KM} proved a log-free zero density estimate for the $L$-functions of cuspidal automorphic representations of $\mathrm{GL}_m$ over $\Q$ satisfying the generalized Ramanujan conjecture.  Their work was made unconditional by the authors \cite{TZ_GLn}.  These proofs use large sieve inequalities, ensuring that certain Dirichlet polynomials indexed by the representations in the family are small on average.  The large sieve for Dirichlet characters is classical \cite{Bombieri2}.  In \cite{KM,TZ_GLn}, the large sieve inequalities that yield strong zero density estimates near $\re(s)=1$ have relied crucially on the automorphy of the representations in the family so that one can attach to a pair of representations a Rankin--Selberg $L$-function having good analytic properties (e.g., analytic continuation).

Here, we consider certain families where the ability to form Rankin--Selberg $L$-functions with good properties is not yet known because automorphy has not yet been proved.  Let $G$ be a finite group, and let $K$ be a number field, Galois over $\Q$, with $D_K=|\mathrm{disc}(K/\Q)|$ and $\Gal(K/\Q)\cong G$.  Let $\zeta_{K}(s)$ be the Dedekind zeta function of $K$ and $\zeta(s)$ be the Riemann zeta function.  Aramata and Brauer proved that $\zeta_K(s)/\zeta(s)$ is entire, but it is not yet known to be the $L$-function of an automorphic representation except in special cases.  Such an automorphy result would be an immediate consequence of the strong Artin conjecture.  For $\sigma\geq 0$ and $T\geq 0$, define
\begin{equation}
\label{eqn:NsigmaT}
N_{K}(\sigma,T) := \#\{\beta+i\gamma \in \mathbb{C}\colon \zeta_K(\beta+i\gamma)/\zeta(\beta+i\gamma) = 0,~\beta \geq \sigma,~|\gamma| \leq T\}.
\end{equation}
Note that $\zeta_K(s)/\zeta(s)$ satisfies GRH if and only if $N_K(\sigma,T)=0$ for all $\sigma>\frac{1}{2}$ and all $T\geq 0$.
\begin{theorem} \label{thm:ZDE-Artin}
Let $G$ be a nontrivial finite group.  Let $\mathfrak{F}_G$ be a family of number fields $K$ that are Galois over $\Q$ with $\Gal(K/\Q)\cong G$, and let $\mathfrak{F}_G(Q)=\{K\in\mathfrak{F}_G\colon D_K\leq Q\}$ be nonempty.  Define
\begin{equation}
\label{def:intersection-multiplicity}
	\mathfrak{m}_{\kF_G}(Q) := \max_{ K' \in \mathfrak{F}_G(Q)} \#\{ K \in \mathfrak{F}_G(Q)\colon K \cap K' \neq \Q \}. 
\end{equation}
If $\sigma\geq 0$ and $Q,T\geq 2$, then
	\[
	\sum_{K \in \mathfrak{F}_G(Q)} N_{K}(\sigma,T) \ll_{|G|} \mathfrak{m}_{\kF_G}(Q) (QT)^{10^7 |G|^3(1-\sigma)}(\log QT)^{|G|^2}.
	\]
\end{theorem}

\begin{remark}
By assembling the ideas going into the proof of \cite[Theorem 5.5]{PTW} (which build on \cite{KM}), one can extract from the work of Pierce, Turnage-Butterbaugh, and Wood \cite{PTW} a variant of \cref{thm:ZDE-Artin} whose proof relies crucially on the strong Artin conjecture.
\end{remark}

Along with avoiding automorphy assumptions and possessing a relatively clean proof, the most interesting part of \cref{thm:ZDE-Artin} is the presence of the ``intersection multiplicity'' $\mathfrak{m}_{\mathfrak{F}_G}(Q)$, which seems to be new to the literature.  It arises naturally in our proof, and it is at least 1 when $\#\mathfrak{F}_G(Q)\geq 1$.  Its asymptotic estimation is a purely arithmetic problem determined by $G$ and $\mathfrak{F}_{G}$.

\section{Applications}
\label{sec:cheb_average}

Let $G$ be a finite group, $K/\Q$ be a Galois extension of number fields with $\mathrm{Gal}(K/\Q)\cong G$, and
\begin{align*}
\pi_C(x,K/\Q):=\#\Big\{p\colon p\nmid D_K,~\Big[\frac{K/F}{p}\Big]=C,~ p\leq x\Big\},
\end{align*}
where $C\subseteq G$ is a given conjugacy class and the Artin symbol $[\frac{K/F}{p}]$ denotes the conjugacy class of Frobenius automorphisms attached to the prime ideals of $K$ lying over $p\nmid D_K$.  Lagarias and Odlyzko \cite{LO} proved that under GRH for $\zeta_{K}(s)$, we have an effective form of the Chebotarev density theorem that counts prime ideals of small norm with a given Artin symbol:
\begin{equation}
\label{eqn:GRH_range_of_x}
\Big|\pi_C(x,K/F)-\frac{|C|}{|G|}\int_2^x\frac{dt}{\log t}\Big|\ll\frac{|C|}{|G|}x^{\frac{1}{2}}\log(D_{{K}}x^{[{K}:\Q]}),\quad x\gg (\log D_{{K}})^{2}(\log\log D_K)^4.
\end{equation}

Without GRH, one might hope that something similar to \eqref{eqn:GRH_range_of_x} holds on average over a set $\kF_G$ of number fields $K$ that are Galois over $\Q$ with $\Gal(K/\Q)\cong G$.  If $K/\Q$ ranges over abelian extensions, then class field theory implies that the  Artin $L$-functions of $K/\Q$ are in fact Dirichlet $L$-functions, which brings us to the setting of \cite{Bombieri2}.   A recent breakthrough of Pierce, Turnage-Butterbaugh, and Wood \cite[Theorems 1.1 and 1.4]{PTW} allows one to control the average size of the error term in the Chebotarev density theorem across families of extensions $K/\Q$ with a \textit{fixed nonabelian} Galois group, subject to hypotheses that do not include GRH.  We describe part of their work in an exemplary case, namely $G=S_n$ with $n \geq 5$.  Assume the strong Artin conjecture for $S_n$ (that all irreducible Galois representations over $\Q$ with image isomorphic to $S_n$ are in fact cuspidal automorphic representations).  For a number field $k$, denote by $\widetilde{k}$ the Galois closure over $\Q$. Define
\begin{equation}
\label{eqn:PTW-S_m}
\mathscr{F}_n(D):=\{k\colon [k:\Q]=n,~\Gal(\widetilde{k}/\Q)\cong S_n,~\textup{$D_k$ squarefree},~D_k\leq D\}.
\end{equation}
 If there exists a constant $\delta_n>0$ such that
\begin{equation}
\label{eqn:PTW1}
\max_{D'\leq D}\#\{k\in\mathscr{F}_n(D)\colon D_k = D'\}\ll_{n} D^{-\delta_n}\#\mathscr{F}_n(D),
\end{equation}
then for every $A > 0$ there exists a constant $c=c_{A,n}>0$ such that the number fields $k$ with $[k:\Q]=n$, squarefree absolute discriminant $D_k\leq D$, and $\Gal(\widetilde{k}/\Q)\cong S_n$ satisfy
\begin{equation}
\label{eqn:PTW2}
\Big|\pi_C(x,\widetilde{k}/\Q)-\frac{|C|}{|G|}\int_2^x\frac{dt}{\log t}\Big|\leq \frac{|C|}{|G|}\frac{x}{(\log x)^A},\qquad x\gg_{A,n}(\log D_k)^{c(\log\log D_k)^{\frac{2}{3}+o(1)}}
\end{equation}
with $O_{n}(D^{-\delta_n}\#\mathscr{F}_n(D))$ exceptions.  Thus, subject to well-believed conjectures not including GRH, they show that for the Galois closures of most degree $n$ $S_n$-fields of squarefree discriminant, one obtains highly accurate counts for primes with a given Artin symbol.  When $G$ is the dihedral group $D_q$ ($q\geq 3$ prime) or a small group (e.g., $S_3$, $S_4$, or $A_4$), their results are unconditional because they prove analogues of \eqref{eqn:PTW1}, and the strong Artin conjecture holds for these groups.

The study of $\ell$-torsion in class groups of number fields helps motivate the work in \cite{PTW}.  It follows from work of Ellenberg and Venkatesh \cite[Lemma 2.3]{EV} that if $k$ is a number field, $\ell\geq 2$, and there exists a constant $\delta>0$ such that there are $M$ primes $p\leq D_k^{1/(2\ell([k:\Q]-1))-\delta}$ that split completely in $k$, then for all $\eta>0$, the $\ell$-torsion subgroup $\mathrm{Cl}_k[\ell]$ of the ideal class group $\mathrm{Cl}_k$ of $k$ satisfies $|\mathrm{Cl}_k[\ell]|\ll_{[k:\Q],\ell,\eta}D_k^{1/2+\eta}M^{-1}$.  Primes that split completely in the Galois closure $\tilde{k}$ (over $\Q$) also split completely in $k$, so if \eqref{eqn:GRH_range_of_x} holds for $\widetilde{k}$, then
\begin{equation}
	\label{eqn:GRH_ell_torsion}
	|\Class_{k}[\ell]|\ll_{[k:\Q],\ell,\eta}D_k^{\frac{1}{2}-\frac{1}{2\ell([k:\Q]-1)}+\eta}.
\end{equation}
This improves on the trivial Minkowski bound $O_{[k:\Q]}(D_k^{1/2}(\log D_k)^{[k:\Q]+1})$.  Even though \eqref{eqn:PTW2} does not provide a range of $x$ as strong as \eqref{eqn:GRH_range_of_x}, \eqref{eqn:PTW2} suffices to deduce \eqref{eqn:GRH_ell_torsion}.

Our next result is a natural application of \cref{thm:ZDE-Artin} that does not rely on unproven progress towards the strong Artin conjecture.

\begin{theorem}
\label{thm:main-Artin}
	Let $G$ be a nontrivial finite group, $\epsilon>0$, and $Q\geq 3$.  Let $\mathfrak{F}_G$ and $\mathfrak{m}_{\mathfrak{F}_G}(Q)$ be as in \eqref{def:intersection-multiplicity}.  For all except $O_{|G|,\epsilon}(\mathfrak{m}_{\mathfrak{F}_G}(Q)Q^{\epsilon})$ number fields $K\in\mathfrak{F}_G(Q)$, the following results hold.
\begin{enumerate}[(i)]
	\item If $C\subseteq G$ is a conjugacy class and $x\geq (\log D_K)^{10^9 |G|^3/\epsilon}$, then
		\[
		\Big|\pi_C(x,K/\Q)-\frac{|C|}{|G|}\int_2^x\frac{dt}{\log t}\Big|\ll_{|G|}x\exp\Big(-\frac{1}{29}\Big(\frac{\log x}{|G|}\Big)^{1/2}\Big).
		\]
	\item Let $\ell\geq 2$ be an integer and $\eta>0$.  If $k\subseteq K$ is a subfield such that $\widetilde{k}=K$, then \eqref{eqn:GRH_ell_torsion} holds.
\end{enumerate}
\end{theorem}
\begin{proof}
Part (i) follows from \cref{prop:FlexiError-Li,prop:large-ZFR} below.   Part (ii) follows from Part (i) by the discussion preceding \eqref{eqn:GRH_ell_torsion}.
\end{proof}

\begin{remark}
\cref{thm:main-Artin} is not vacuous when there exists a constant $\delta=\delta(|G|)>0$ such that
	\begin{equation}
	\label{eqn:condition1}
	\mathfrak{m}_{\kF_G}(Q) \ll_{|G|}Q^{-\delta}\#\mathfrak{F}_G(Q).
	\end{equation}
The condition \eqref{eqn:condition1} implicitly requires $\#\mathfrak{F}_G(Q) \gg_{|G|} Q^{\delta}$ since $\mathfrak{m}_{\mathfrak{F}_G}(Q)\geq 1$.  In general, the condition \eqref{eqn:PTW1} is at least as restrictive as the condition \eqref{eqn:condition1}.
\end{remark}

We present some examples of unsolvable groups $G$ and families $\mathfrak{F}_G$ for which \eqref{eqn:condition1} holds with an explicit $\delta$.  The strong Artin conjecture is not known in these settings, so these examples are not accessible by the methods in \cite{PTW}.  Even for the groups and families considered in \cite{PTW}, \cref{thm:main-Artin} gives an effective Chebotarev density theorem with a range of $x$ that is superior to the range in \eqref{eqn:PTW2} produced by the methods in \cite{PTW}.  We will use the following result:  If $r\geq 2$, $G$ is a transitive subgroup of the symmetric group $S_r$, and $k$ is a number field whose Galois closure $\tilde{k}$ over $\Q$ has Galois group isomorphic to $G$, then we have the bounds $D_k^{|G|/r}\ll_{r} D_{\tilde{k}}\ll_{r} D_k^{|G|/2}$.

\begin{example}
For $n\geq 5$, let $G$ be the alternating group $A_n$ of order $\frac{n!}{2}$ and
	\[
	\mathcal{F}_G(D)=\{k\colon [k:\Q]=n,~\mathrm{Gal}(\widetilde{k}/\Q)\cong G,~D_k\leq D\},\qquad \mathfrak{F}_G=\{\widetilde{k}\colon k\in \mathcal{F}_G(Q^{2/|G|})\}.
	\]
If $K,K'\in\mathfrak{F}_G$ and $K\cap K'\neq\Q$, then $K\cap K'$ is Galois over $\Q$.  Thus, there exist nontrivial normal subgroups $N\subseteq\Gal(K/\Q)$ and $N'\subseteq\Gal(K'/\Q)$ such that $K^N=K\cap K'=(K')^{N'}$.  Since $\Gal(K/\Q)\cong \Gal(K'/\Q)\cong A_n$ and $n\geq 5$, these Galois groups are simple.  The condition $K\cap K'\neq\Q$ is therefore equivalent to $K=K'$, hence $\mathfrak{m}_{\mathfrak{F}_G}(Q) = 1$ for all $Q\geq 1$.   Landesman, Lemke Oliver, and Thorne \cite[Theorem 1.1]{LLOT} proved that $\#\mathcal{F}_G(D) \gg_n D^{1/30}$, so $\#\mathfrak{F}_G(Q)\gg Q^{1/(15|G|)}$.  Thus, \cref{thm:main-Artin} holds with an exceptional set of size $O_{n,\epsilon}(Q^{\epsilon})$, which is nontrivial when $0<\epsilon<1/(15|G|)$.  This appears to be the first unconditional instance where infinitely many extensions of arbitrarily large degree and unsolvable Galois closure over $\Q$ have non-trivial bounds on $\ell$-torsion in their class groups with $\ell\geq 3$.  The argument applies with the same conclusions when $G$ is any finite simple group, provided that there exists a constant $\delta=\delta(|G|)>0$ such that $\#\mathfrak{F}_G(Q)\gg_{|G|}Q^{\delta}$.
\end{example}
	
\begin{example}
For $n\geq 5$, let $G$ be the symmetric group $S_n$ of order $n!$.  Recall \eqref{eqn:PTW-S_m}, and define $\mathfrak{F}_{G}=\{\tilde{k}\colon k\in\mathscr{F}_n(Q^{2/|G|})\}$.  It follows from the setup in \cite[Section 5]{PTW} that $\mathfrak{m}_{\mathfrak{F}_{G}}(Q)$ is bounded by \eqref{eqn:PTW1}.  If there exist constants $c_n>0$ and $0<\varpi_n<1$, depending at most on $n$, such that
	\[
	\#\mathscr{F}_n(D)=c_n D+O_{n}(D^{\varpi_n}),
	\]
	then $\#\mathfrak{F}_{G}(Q)\gg_{n}Q^{2/|G|}$ and $\mathfrak{m}_{\mathfrak{F}_{G}}(Q)\ll_{|G|}Q^{2\varpi_n/|G|}$.  Therefore, \cref{thm:main-Artin} holds with an exceptional set of size $O_{n,\epsilon}(Q^{2(\varpi_n-1)/|G|+\epsilon}\#\mathfrak{F}_{G}(Q))$.  When $n=5$, it follows from work of Bhargava \cite{Bhargava} and Shankar and Tsimerman \cite{ShankarTsimerman} that any $\varpi_5>\frac{199}{200}$ is permissible.
\end{example}


\subsection*{Outline of the paper} 
\cref{sec:Artin} reviews Artin $L$-functions. \cref{sec:prelim_large_sieve} develops a large sieve inequality for Dedekind zeta functions that we use to prove  \cref{thm:ZDE-Artin} in \cref{sec:ZDE}. In \cref{sec:avg_Chebotarev_error}, we use \cref{thm:ZDE-Artin} to prove \cref{thm:main-Artin}.

\subsection*{Acknowledgements}

We thank Kannan Soundararajan for helpful discussions, Nicholas Lai and Lior Silberman for showing us \cite{Brauer}, and the anonymous referee for many helpful comments.  Work began while the authors were at Stanford University.  JT was partially supported by an NSF Postdoctoral Fellowship.  AZ was partially supported by an NSERC fellowship.

 \section{Artin $L$-functions}
\label{sec:Artin}

\subsection{Preliminaries}
\label{subsec:Artin}

We briefly recall the definition of an Artin $L$-function from \cite[Chapter 2, Section 2]{MurtyMurty}.  Let $K/\Q$ be a Galois extension of number fields with Galois group isomorphic to $G$. Let $\cO_K$ be the ring of integers of $K$.  For each prime $p$, and a prime ideal $\mathfrak{p}$ of $K$ lying above $p$, we define the decomposition group $D_{\mathfrak{p}}$ to be $\mathrm{Gal}(K_{\mathfrak{p}}/\Q_{p})$, where $K_{\mathfrak{p}}$ (resp. $\Q_{p}$) is the completion of $K$ (resp. $\Q$) at $\mathfrak{p}$ (resp. $p$).  We have a map $D_{\mathfrak{p}}\to\mathrm{Gal}(k_{\mathfrak{p}}/k_{p})$ (the Galois group of the residue field extension), which is surjective by Hensel's lemma.  The kernel of this map is the inertia group $I_{\mathfrak{p}}$, so we have the exact sequence $1\to I_{\mathfrak{p}}\to D_{\mathfrak{p}}\to\mathrm{Gal}(k_{\mathfrak{p}}/k_{p})\to 1$.  The group $\mathrm{Gal}(k_{\mathfrak{p}}/k_{p})$ is cyclic with generator $x\mapsto x^{p}$, where $p$ is the cardinality of $k_{p}$.  We can choose an element $\sigma_{\mathfrak{p}}\in D_{\mathfrak{p}}$ whose image in $\mathrm{Gal}(k_{\mathfrak{p}}/k_{p})$ is this generator.  We call $\sigma_{\mathfrak{p}}$ a Frobenius element at $\mathfrak{p}$; it is well-defined modulo $I_{\mathfrak{p}}$.  If $p$ is unramified and $\kp$ lies above $p$, then $I_{\mathfrak{p}}$ is trivial.

Let $\rho:G\to\mathrm{GL}_n(\mathbb{C})$ be a representation of $G$, $\chi$ denote its character, and $q_{\chi}$ denote its conductor.  Let $V$ be the underlying complex vector space on which $\rho$ acts, and let $V^{I_{\mathfrak{p}}}$ be the subspace of $V$ on which $I_{\mathfrak{p}}$ acts trivially.  If $p$ is prime and $\kp$ lies above $p$, then the eigenvalues of $\rho(\sigma_{\kp})|_{V^{I_{\kp}}}$ depend only on $\chi$ and $p$.  Therefore, we write the eigenvalues as $\alpha_{1,\chi}(p),\ldots,\alpha_{n,\chi}(p)$, each of which has modulus at most 1.  If $p$ is unramified, then each $\alpha_{j,\chi}(p)$ has modulus equal to 1.  We now define
\[
L_{p}(s,\chi)=\prod_{j=1}^n\Big(1-\frac{\alpha_{j,\chi}(p)}{p^{s}}\Big)^{-1},\qquad L(s,\chi) = \prod_{p}L_p(s,\chi)=\sum_{n=1}^{\infty}\frac{\lambda_{\chi}(n)}{n^s}.
\]
Since $|\alpha_{j,\chi}(p)|\leq 1$ always, $L(s,\chi)$ converges absolutely for $\re(s)>1$.

If $\Gamma_{\R}(s) = \pi^{-s/2}\Gamma(s/2)$ and $L_{\infty}(s,\chi) = 
\Gamma_{\R}(s)^{a(\chi)} \Gamma_{\R}(s+1)^{\chi(1)-a(\chi)}$, where $a(\chi)$ is the dimension of the $+1$ eigenspace of complex conjugation, then $\Lambda(s,\chi)=  (q_{\chi})^{s/2}L(s,\chi)L_{\infty}(s,\chi)$ has a meromorphic continuation to $\mathbb{C}$, and there exists $W(\chi) \in \mathbb{C}$ of modulus one such that $\Lambda(s,\chi) = W(\chi)\overline{\Lambda(1-\bar{s},\chi)}$ for all $s\in\mathbb{C}$ at which $\Lambda(s,\chi)$ is holomorphic. Conjecturally, $\Lambda(s,\chi)$ is entire when $\chi$ is nontrivial. When $\chi$ is trivial, we have $L(s,\chi)=\zeta(s)$.

\subsection{The Aramata--Brauer theorem}

For a finite group $G$, let $\reg_G$ (resp. $1_G$) denote the character of the regular (resp. trivial) representation of $G$. For a Galois extension $K/\Q$ of number fields, let $\chi_K$ denote the character of $\Gal(K/\Q)$ defined by $\chi_K = \reg_{\Gal(K/\Q)} - 1_{\Gal(K/\Q)}$.  Its associated Artin $L$-function  is given by $L(s,\chi_K) = \zeta_{K}(s)/\zeta(s)$.  Per \cref{subsec:Artin}, there exist complex numbers $\alpha_{j,K}(p)$ with modulus at most 1 such that if $\re(s)>1$, then
\begin{equation}
	\label{eqn:Artin-localroots}
	L(s,\chi_K) =  \prod_{p} \prod_{j=1}^{[K:\Q]-1}  \Big(1 - \frac{\alpha_{j,K}(p)}{p^s}\Big)^{-1} = \sum_{n=1}^{\infty} \frac{\lambda_{K}(n)}{n^{s}}.  
\end{equation}
The Dirichlet series coefficients $\lambda_{K}(n)$ are defined in terms of the local roots $\alpha_{j,K}(p)$ via this identity.  Since $\chi_K$ is a real-valued valued character, we have that $\lambda_K(n)\in\R$ for all $n\geq 1$.   Also, since $D_K$ is the conductor of $\zeta_{K}(s)$ over $\Q$, it follows that $q_{\chi_K}=D_K$.  

\begin{theorem}[Aramata--Brauer]
\label{thm:Aramata_Brauer}
	The Artin $L$-function $L(s,\chi_K)$ is entire of order one. 
\end{theorem}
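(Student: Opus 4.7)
The plan is to prove the Aramata--Brauer theorem by reducing $L(s,\chi_K)$ to a rational product of Hecke $L$-functions over intermediate abelian subextensions, and then bootstrapping from a power of $L(s,\chi_K)$ to $L(s,\chi_K)$ itself. Set $G=\Gal(K/\Q)$, so that $\chi_K=\reg_G-1_G$.

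First I would invoke Aramata's positivity lemma: there is a positive integer $N$ and non-negative integers $c_{H,\psi}$, indexed by pairs consisting of a cyclic subgroup $H\leq G$ and a non-trivial $1$-dimensional character $\psi$ of $H$, such that
\[
N\chi_K \;=\; \sum_{H,\psi} c_{H,\psi}\,\Ind_H^G\psi.
\]
This is not a consequence of Brauer's general induction theorem alone (which merely yields integer, not non-negative, coefficients); one needs Aramata's explicit construction, which writes each $\reg_H-1_H$ on a cyclic subgroup as a non-negative combination of non-trivial characters of $H$ and then adds the contributions from the conjugates.

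Next I would translate this identity into $L$-functions. The inductive property of Artin $L$-functions gives
\[
L(s,\Ind_H^G\psi,\,K/\Q) \;=\; L(s,\psi,\,K/K^H).
\]
Since $H$ is cyclic, $K/K^H$ is an abelian extension, and class field theory identifies $\psi$ with a non-trivial Hecke character $\chi_\psi$ of $K^H$; Hecke's theorem therefore says $L(s,\psi,K/K^H)=L(s,\chi_\psi)$ is entire of order one. Multiplying and using \eqref{eqn:Artin-ratioDedekindZeta},
\[
L(s,\chi_K)^N \;=\; \prod_{H,\psi} L(s,\chi_\psi)^{c_{H,\psi}},
\]
and the right-hand side is a finite product of entire functions of order one, hence itself entire of order one.

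Finally I would deduce the theorem as stated. A priori, $L(s,\chi_K)=\zeta_K(s)/\zeta(s)$ is meromorphic on $\mathbb{C}$ (the simple pole of $\zeta_K$ at $s=1$ cancels that of $\zeta$); any hypothetical pole of order $k$ would produce a pole of order $Nk$ in $L(s,\chi_K)^N$, contradicting the previous paragraph. Therefore $L(s,\chi_K)$ is entire. Since the order of an entire function equals the order of any of its positive integer powers, and $L(s,\chi_K)^N$ has order one, $L(s,\chi_K)$ has order one as well. (Equality, not merely $\leq 1$, follows from comparing with $\zeta_K(s)=\zeta(s)L(s,\chi_K)$ and the classical fact that $\zeta_K$ has order exactly one.)

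The main obstacle is the non-negativity of the coefficients $c_{H,\psi}$: without it one only has an equality of the form $L(s,\chi_K)^{N_1}=L(s,\chi_K)^{-N_2}\prod L(s,\chi_\psi)^{c_{H,\psi}}$, i.e.\ a rational identity that does not immediately rule out poles of $L(s,\chi_K)$. Aramata's positivity is precisely what converts the purely formal Brauer-type decomposition into a genuine analytic statement about $\zeta_K(s)/\zeta(s)$.
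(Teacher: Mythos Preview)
The paper does not supply a proof of this statement at all; it is quoted as the classical Aramata--Brauer theorem and used as a black box throughout. Your outline is a correct rendering of the standard argument (Aramata's non-negative decomposition of $N\chi_K$ into characters induced from non-trivial linear characters of cyclic subgroups, followed by Hecke's theorem for the resulting abelian $L$-functions and a bootstrap from $L(s,\chi_K)^N$ to $L(s,\chi_K)$), so there is no discrepancy to discuss beyond the fact that you prove what the paper merely cites.
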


\subsection{Composite fields}
Let  $K/\Q$ (resp. $K'/\Q$)  be a Galois extension of number fields with Galois group $G$ (resp. $G'$) and ring of integers $\cO_K$ (resp. $\cO_{K'}$). Write $m= [K:\Q]-1$ and $m'=  [K':\Q]-1$.  Assume that $K \cap K' = \Q$.  Since $K$ and $K'$ are Galois over $\Q$, our assumption implies that $K$ and $K'$ are linearly disjoint over $\Q$.  This condition yields a natural isomorphism
	\[
	\mathrm{Gal}(K'K/\Q) \mathop{\longrightarrow}^{\cong} \mathrm{Gal}(K/\Q) \times \mathrm{Gal}(K'/\Q) = G \times G'
	\]
	which sends $\sigma \mapsto (\sigma|_{K},  \sigma|_{K'})$. Moreover, via this isomorphism, every irreducible character of $\mathrm{Gal}(K'K/\Q)$ is an (external) tensor product of the shape $\chi \otimes \chi'$, where $\chi$ (resp. $\chi'$) is the character of an irreducible representation of $G$ (resp. $G'$). Thus, as  characters,
	\[
	\reg_{G \times G'}   = \reg_{G}  \otimes \reg_{G'}  = 1_{G \times G'} +  \chi_{K} \otimes 1_{G'}  + 1_{G} \otimes \chi_{K'}   + \chi_{K} \otimes \chi_{K'}.
	\]
In terms of Artin $L$-functions, this yields
\begin{equation}
	\zeta_{K'K}(s) = \zeta(s) L(s,\chi_{K}) L(s,\chi_{K'})   L(s,\chi_K \otimes \chi_{K'}).  
\label{eqn:Artin-decomposition}
\end{equation}
The Artin $L$-function $L(s,\chi_K \otimes \chi_{K'})$ is defined by the character $\chi_K \otimes \chi_{K'}$ of $\Gal(K'K/\Q)$.  The following lemma  provides  useful bounds for the conductor $q_{\chi_K\otimes\chi_{K'}}$.

\begin{lemma} \label{lem:Artin-BH}
	If $K/\Q$ and $K'/\Q$ are Galois and $K \cap K' = \Q$, then $D_{K'K}|D_{K}^{[K':\Q]}  D_{K'}^{[K:\Q]}$. 
\end{lemma}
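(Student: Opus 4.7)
The plan is to combine the tower formula for discriminants with a base-change comparison of orders. First, I would apply multiplicativity of discriminants in the tower $\Q \subset K \subset KK'$:
\[
D_{KK'} = \N_{K/\Q}(\mathfrak{d}_{KK'/K}) \cdot D_K^{[KK':K]},
\]
where $\mathfrak{d}_{KK'/K}$ denotes the relative discriminant ideal of $\O_{KK'}/\O_K$. Since $K/\Q$ is Galois and $K \cap K' = \Q$, the fields $K$ and $K'$ are linearly disjoint over $\Q$, so $[KK':K] = [K':\Q]$. This already produces the factor $D_K^{[K':\Q]}$ in the claimed bound.

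It then remains to show $\N_{K/\Q}(\mathfrak{d}_{KK'/K}) \mid D_{K'}^{[K:\Q]}$. For this, I would exhibit the natural $\O_K$-linear map $\O_K \otimes_\Z \O_{K'} \to \O_{KK'}$, which is injective by linear disjointness and whose source and target are $\O_K$-lattices of the same rank $[K':\Q]$ inside $KK'$. By base change of the trace pairing, the discriminant of $\O_K \otimes_\Z \O_{K'}$ over $\O_K$ equals $D_{K'}\O_K$. Since the relative discriminant of a submodule of full rank is divisible by that of the ambient module (it differs by the square of the transition determinant, which lies in $\O_K$), one obtains $\mathfrak{d}_{KK'/K} \mid D_{K'}\O_K$. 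Taking the ideal norm $\N_{K/\Q}$ down to $\Z$ and using $\N_{K/\Q}(D_{K'}\O_K) = D_{K'}^{[K:\Q]}$ then combines with the tower formula to give the lemma.

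The only mildly delicate point is verifying the base-change identity $\mathrm{disc}_{\O_K}(\O_K \otimes_\Z \O_{K'}) = D_{K'}\O_K$. I would check this by picking a $\Z$-basis of $\O_{K'}$, which extends scalar-wise to an $\O_K$-basis of the tensor product, and noting that its matrix of trace pairings is unchanged under the $\O_K$-linear extension of $\mathrm{Tr}_{K'/\Q}$ to $\mathrm{Tr}_{KK'/K}$. Everything else is routine manipulation of fractional ideals in a Dedekind domain; crucially, since only divisibility (not equality) is needed, no coprimality of $D_K$ and $D_{K'}$ or tameness of ramification is required, which is what makes this clean bound hold in full generality.
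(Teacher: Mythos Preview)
Your proof is correct and follows essentially the same approach as the paper: both use the tower formula $D_{KK'}=D_K^{[K':\Q]}\N_{K/\Q}(\mathfrak{d}_{KK'/K})$ and then verify $\mathfrak{d}_{KK'/K}\mid D_{K'}\mathcal{O}_K$ by observing that a $\Z$-basis of $\mathcal{O}_{K'}$ sits inside $\mathcal{O}_{KK'}$ as a $K$-basis of $KK'$ with discriminant $D_{K'}$. Your phrasing via the order $\mathcal{O}_K\otimes_{\Z}\mathcal{O}_{K'}\hookrightarrow\mathcal{O}_{KK'}$ and the index-squared formula is just a more structural packaging of the same observation the paper makes by appealing directly to the definition of the relative discriminant as the ideal generated by discriminants of integral bases.
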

\begin{proof}
We use \cite[Part III, Section 2]{Neukirch}. As $K \cap K' = \Q$, the relative discriminant $\kd_{K'K/K}\subseteq\cO_{K}$ satisfies $D_{K'K} = D_{K}^{[K':\Q]} \N_{K/\Q}  \kd_{K'K/K}$. By definition, $\kd_{K'K/K}$ is generated by all bases of $K'K/K$ contained in $\cO_{K'K}$.  Since $K  \cap K' = \Q$, any basis of $K'/\Q$ contained in $\cO_{K'}$ is a basis for $K'K/K$ contained in $\cO_{K'K}$. Thus, $\kd_{K'K/K}$ contains $D_{K'} \cO_{K}$, so $\kd_{K'K/K}$  divides $D_{K'} \cO_{K}$. Since $K \cap K' = \Q$, we have $\N_{K/\Q} (D_{K'} \cO_{K}) = D_{K'}^{[K:\Q]}$ so  we conclude that $D_{K'K}$ divides $D_{K}^{[K':\Q]} D_{K'}^{[K:\Q]}$, as desired. 
\end{proof}
	
	\begin{proposition}
	\label{prop:AramataBrauer-extension}
	Let  $K/\Q$ (resp. $K'/\Q$) be Galois extensions of number fields with $[K:\Q]=m+1$ and (resp. $[K':\Q]=m'+1$). If $K \cap K' =  \Q$, then the Artin $L$-function $ L(s,\chi_{K} \otimes \chi_{K'})=\zeta_{K'K}(s)\zeta(s)\zeta_{K}(s)^{-1}\zeta_{K'}(s)^{-1}$ is entire. Furthermore, its Artin conductor divides $D_{K}^{m'} D_{K'}^{m}$.
	\end{proposition}

\begin{proof}
Brauer \cite{Brauer} proved that $L(s,\chi_K\otimes\chi_{K'})$ is entire, and the expression for $L(s,\chi_K \otimes \chi_{K'})$ as a ratio of Dedekind zeta functions  follows from \eqref{eqn:Artin-decomposition}.  Therefore, the Artin conductor of $L(s,\chi_K\otimes\chi_{K'})$ is $D_{K'K}/(D_K D_{K'})$, which divides $D_K^{m'}D_{K'}^{m}$ by \cref{lem:Artin-BH}.
\end{proof}

\section{A large sieve inequality for Artin representations}
\label{sec:prelim_large_sieve}

Let $G$ be a nontrivial finite group, and let $\mathfrak{F}_G$ be a family of number fields $K$, Galois over $\Q$, with $\Gal(K/\Q)\cong G$.  Let $Q\geq 1$ and $\mathfrak{F}_G(Q)=\{K\in\mathfrak{F}_G\colon D_K\leq Q\}$.  Our main result for this section is a large sieve inequality for the Dirichlet coefficients $\lambda_K(n)$ as $K\in\mathfrak{F}_G(Q)$ varies.  Its key features are the absence of unproven hypotheses towards the strong Artin conjecture (even though it is not yet known in full generality for $L(s,\chi_K)$ or $L(s,\chi_K\otimes\chi_{K'})$) and the role of $\mathfrak{m}_{\mathfrak{F}_G}(Q)$ in \eqref{def:intersection-multiplicity}.
\begin{theorem}
\label{thm:pre_large_sieve}
	Let $b:\Z\to\mathbb{C}$ be a function, and let $G$ be a nontrivial finite group of order $m+1$.  There exists a constant $B_m>0$, depending only on $m$, such that if $Q,T\geq 1$, $x>1$ and $\epsilon>0$, then
	\begin{multline*}
	\sum_{K\in\mathfrak{F}_G(Q)}\Big|\sum_{\substack{n \in(x,xe^{1/T}]}}\lambda_{K}(n)b(n)\Big|^2\\
	\ll_{m,\epsilon}\Big(\mathfrak{m}_{\kF_G}(Q)\frac{x(\log x)^{m^{2}-1}}{T}+\sqrt{x}(\log x)^{B_m}Q^{\frac{m}{2}+1+\epsilon}T^{\frac{m^2}{4}+\epsilon}\Big)\sum_{n \in(x,xe^{1/T}]}|b(n)|^2.
	\end{multline*}
\end{theorem}

Let $\phi$ be a smooth test function which is supported on a compact subset of $[-2,2]$, and let
\begin{equation}
\label{eqn:mellin}
\widehat{\phi}(s) = \int_{\R}\phi(y)e^{sy}dy
\end{equation}
be its Laplace transform.  Then $\widehat{\phi}(s)$ is entire, and for each integer $j\geq 0$, we have that
\begin{equation}
\label{eqn:test_fcn_bounds}
\widehat{\phi}(s)\ll_{\phi,j}e^{2|\mathrm{Re}(s)|}|s|^{-j}.
\end{equation}
Let $T\geq 1$. By Laplace inversion, if $x>0$ and $c\in\R$, then
\[
\phi(T\log x)=\frac{1}{2\pi i T}\int_{c-i\infty}^{c+i\infty}\widehat{\phi}\Big(\frac{s}{T}\Big)x^{-s}ds.
\]

Our proof of \cref{thm:pre_large_sieve} requires control over the sums
\begin{equation}
\label{eqn:K'K}
\sum_{n\geq 1}\lambda_{K}(n)\lambda_{K'}(n)\phi\Big(T\log\frac{n}{x}\Big).
\end{equation}
When $K\cap K'=\Q$, $\lambda_K(n)\lambda_{K'}(n)$ is related to the $n$-th Dirichlet coefficient of $L(s,\chi_K\otimes\chi_{K'})$.
\begin{lemma}
	\label{lem:DK}
	Let $G$ be a finite group of order $m+1\geq 2$ and $\epsilon>0$.  Let $K$ and $K'$ be Galois extensions of $\Q$ with $\Gal(K/\Q),\Gal(K'/\Q)\cong G$.  If $K\cap K'=\Q$, then there exists a function $H(s;K,K')$, holomorphic in the region $\re(s)>\frac{1}{2}$, such that
	\[
	\sum_{n=1}^{\infty}\frac{\lambda_K(n)\lambda_{K'}(n)}{n^s} = L(s,\chi_K\otimes\chi_{K'})H(s;K,K').
	\]
	Moreover, there is a constant $B_m>0$ such that $|H(s;K,K')|\ll_{m,\epsilon}(D_K D_{K'})^{\frac{\epsilon}{4}}(\re(s)-\frac{1}{2})^{-B_m}$.
\end{lemma}
\begin{proof}
If $K\cap K'=\Q$, then per \cref{lem:Artin-BH} and \cref{prop:AramataBrauer-extension}, the $L$-function $L(s,\chi_K\otimes\chi_{K'})$ is an entire Artin $L$-function whose ramified primes necessarily divide $D_K D_{K'}$.  For each prime $p|D_{K}D_{K'}$, there exist complex numbers $\alpha_{j,j',K\times K'}(p)$ for $1\leq j\leq m$ and $1\leq j'\leq m'$, each with modulus at most 1, such that if $\re(s)>1$, then
\[
L(s,\chi_K\otimes\chi_{K'})=\Big(\prod_{p|D_K D_{K'}}\prod_{j=1}^m \prod_{j'=1}^{m'}\Big(1-\frac{\alpha_{j,j',K\times K'}(p)}{p^s}\Big)^{-1}\Big)\prod_{p\nmid D_K D_{K'}}\prod_{j=1}^m \prod_{j'=1}^{m'}\Big(1-\frac{\alpha_{j,K}(p)\alpha_{j',K'}(p)}{p^s}\Big)^{-1}.
\]
The proof now proceeds exactly according to the recipe in \cite[Proposition 2]{DK}.
\end{proof}

\begin{lemma}
	\label{lem:convexity}
	Let $\sigma\geq 0$, $t\in\R$, and $\epsilon>0$.  Under the hypotheses of \cref{lem:DK}, we have $|L(\sigma+it,\chi_K\otimes\chi_{K'})|\ll_{m,\epsilon} (D_K^m D_{K'}^m (1+|t|)^{m^2})^{\max\{\frac{1-\sigma}{2},0\}+\frac{\epsilon}{4}}$.
\end{lemma}
\begin{proof}
	Since $|\alpha_{j,K}(p)|$, $|\alpha_{j',K'}(p)|$, and $|\alpha_{j,j',K\times K'}(p)|$ are at most 1, this follows from \cite[(5.20)]{IK}.
\end{proof}

\begin{lemma}
\label{lem:local_density}
Let $T,x\geq 1$ and $\epsilon>0$.  If $K\cap K'=\Q$ and $K,K'\in\mathfrak{F}_G(Q)$, then
	\begin{align*}
	\Big|\sum_{n\geq 1} \lambda_{K}(n)\lambda_{K'}(n)\phi\Big(T\log\frac{n}{x}\Big)\Big|\ll_{m,\phi,\epsilon}\sqrt{x}(\log x)^{B_m} Q^{\frac{m}{2}+\epsilon}  T^{\frac{m^2}{4}+\epsilon}.
	\end{align*}
\end{lemma}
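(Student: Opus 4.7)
The strategy is a standard contour-integration argument. With $c>1$, absolute convergence of the Dirichlet series $L^*(s,\chi_K\times\chi_{K'})$ and Fubini justify the rewriting
\[
S:=\sum_{(n,D_K D_{K'})=1} a_{K\times K'}(n)\phi\Big(T\log\tfrac{n}{x}\Big) = \frac{1}{2\pi i T}\int_{c-i\infty}^{c+i\infty} L^*(s,\chi_K\times\chi_{K'})\,\widehat{\phi}(s/T)\, x^s\, ds
\]
via the inverse Mellin identity for $\phi$ displayed immediately above the lemma. I would then shift the contour to $\re(s)=1/2$. The decomposition \eqref{eqn:RS_DirichletSeries2} writes
\[
L^*(s,\chi_K\times\chi_{K'}) = L(s,\chi_K\otimes\chi_{K'})\prod_{p\mid D_K D_{K'}} L_p(s,\chi_K\otimes\chi_{K'})^{-1}.
\]
The hypothesis $K\cap K'=\Q$ allows me to apply \cref{prop:AramataBrauer-extension}, so $L(s,\chi_K\otimes\chi_{K'})$ is entire; the finite product of local factors $L_p^{-1}$ is entire because each is a polynomial in $p^{-s}$. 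Hence $L^*$ is entire and the shift introduces no residues.

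On the critical line I would estimate the integrand piecewise. Since $\chi_K\otimes\chi_{K'}$ has dimension $m^2$ and Artin conductor dividing $D_K^m D_{K'}^m\leq Q^{2m}$ by \cref{prop:AramataBrauer-extension}, its analytic conductor is $\ll_m Q^{2m}(1+|t|)^{m^2}$ on $\re(s)=1/2$, and the standard Phragm\'en--Lindel\"of convexity bound gives
\[
|L(1/2+it,\chi_K\otimes\chi_{K'})|\ll_{m,\epsilon} Q^{m/2+\epsilon}(1+|t|)^{m^2/4+\epsilon}.
\]
Each ramified factor $L_p(s,\chi_K\otimes\chi_{K'})^{-1}$ is a polynomial in $p^{-s}$ of degree $\leq m^2$ whose roots have modulus $\leq 1$, so it is bounded by $2^{m^2}$ on $\re(s)\geq 0$; the product over $p\mid D_K D_{K'}$ is therefore at most $2^{m^2\omega(D_K D_{K'})}\ll_{m,\epsilon}Q^\epsilon$ by the usual divisor bound $2^{\omega(n)}\leq\tau(n)\ll_\epsilon n^\epsilon$.

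Combining these estimates with the rapid decay $|\widehat{\phi}(s/T)|\ll_{\phi,k}\min(1,(T/|s|)^k)$ from \eqref{eqn:test_fcn_bounds} and splitting the $t$-integral at $|t|=T$, the $|t|\leq T$ portion contributes $\ll Q^{m/2+\epsilon}T^{1+m^2/4+\epsilon}$, and the tail contributes the same after taking $k>m^2/4+\epsilon+1$. Dividing by $T$ and multiplying by $|x^{1/2+it}|=\sqrt x$ yields
\[
|S|\ll_{m,\phi,\epsilon}\sqrt x\, Q^{m/2+\epsilon}\, T^{m^2/4+\epsilon},
\]
which is comfortably within the claimed bound $\sqrt x\, Q^{m/2+\epsilon}\, T^{m^2}$. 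The only non-routine ingredient is invoking Brauer's theorem via \cref{prop:AramataBrauer-extension} to secure both the entireness of $L^*$ and the polynomial conductor bound; the remainder is standard analytic machinery, and there is no real obstacle in the estimation because the stated bound leaves substantial slack in $T$.
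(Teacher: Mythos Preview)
Your proof is correct and follows essentially the same route as the paper's: express the sum as a contour integral of $L^*(s,\chi_K\times\chi_{K'})\widehat{\phi}(s/T)x^s$, use \cref{prop:AramataBrauer-extension} to see that $L^*$ is entire, evaluate on the line $\re(s)=1/2$, bound $L(1/2+it,\chi_K\otimes\chi_{K'})$ by the convexity bound (the paper cites Heath-Brown for the $\epsilon$-free version $Q^{m/2}(2+|t|)^{m^2/4}$), bound the ramified local factors by $Q^{\epsilon}$, and integrate using the rapid decay of $\widehat{\phi}$. One cosmetic slip: the inverse local factor $L_p^{-1}(s,\chi_K\otimes\chi_{K'})=\prod_j(1-\alpha_j p^{-s})$ has roots (in $p^{-s}$) of modulus $\geq 1$, not $\leq 1$; but your bound $|L_p^{-1}|\leq 2^{m^2}$ on $\re(s)\geq 0$ is valid since $|\alpha_j|\leq 1$ and $|p^{-s}|\leq 1$, so the argument stands.
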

\begin{proof}
It suffices to take $0<\epsilon<1$.  If $K\cap K'=\Q$, then by \cref{lem:DK}, the desired sum equals
	\[
		\Big|\frac{1}{2\pi i T}\int_{\frac{1}{2}+\frac{1}{4\log(e x)}-i\infty}^{\frac{1}{2}+\frac{1}{4\log(ex)}+i\infty}L(s,\chi_K\otimes\chi_{K'})H(s;K,K')\widehat{\phi}\Big(\frac{s}{T}\Big) x^s ds\Big|.
	\]
By \cref{lem:DK,lem:convexity} and \eqref{eqn:test_fcn_bounds}, the integral is
	\begin{align*}
	&\ll_{m,\epsilon} \frac{\sqrt{x}(\log x)^{B}}{T}Q^{\frac{m}{2}+\epsilon} \int_{-\infty}^{\infty}(2+|t|)^{\frac{m^2}{4}+\epsilon}\Big|\widehat{\phi}\Big(\frac{1}{T}\Big(\frac{1}{2}+\frac{1}{4\log(ex)}+it\Big)\Big)\Big|dt\\
	&\ll_{m,\phi,\epsilon} \frac{\sqrt{x}(\log x)^{B}}{T}Q^{\frac{m}{2}+\epsilon} \int_{-\infty}^{\infty}(2+|t|)^{\frac{m^2}{4}+\epsilon}\min\Big\{1,\frac{T^{\lfloor\frac{m^2}{4}+2\rfloor+1}}{(2+|t|)^{\lfloor\frac{m^2}{4}+2\rfloor+1}}\Big\}dt,
	\end{align*}
which is bounded as claimed.
\end{proof}

Assuming the strong Artin conjecture, if $K \cap K' \neq \Q$, then the Artin $L$-function whose analytic properties control the partial sums of $\lambda_{K}(n)\lambda_{K'}(n)$ will have a pole at $s=1$ whose order could be as small as 1 and as large as $m^2$.  Therefore, we cannot do much better than the following unconditional result.

\begin{lemma}
	\label{lem:trivial_bound_coeffs}
	Let $T,x\geq 1$.  If $K,K'\in\mathfrak{F}_G(Q)$ and $|G|=m+1$, then
	\[
\sum_{n\geq 1}|\lambda_{K}(n)\lambda_{K'}(n)| \phi\Big(T\log\frac{n}{x}\Big)\ll_{m,\phi} \frac{x}{T}(\log x)^{m^{2}-1}+\sqrt{x}T^{\frac{m^2}{4}}.
	\]
\end{lemma}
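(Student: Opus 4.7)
The plan is to uniformly bound the coefficients $|a_{K\times K'}(n)|$ by the $m^2$-fold divisor function $d_{m^2}(n)$, and then carry out a standard Mellin-transform analysis against $\zeta(s)^{m^2}$. This approach sidesteps any reliance on an analytic continuation of $L(s,\chi_K\otimes\chi_{K'})$ past $\re(s)=1$, which need not be available when $K\cap K'\neq\Q$.

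To establish the coefficient bound, fix a prime $p\nmid D_KD_{K'}$. By \eqref{n-prime2S}, $a_{K\times K'}(p^j)$ equals the sum over partitions $\lambda$ with $\ell(\lambda)\leq m$ and $|\lambda|=j$ of products $s_\lambda(A_K(p))s_\lambda(A_{K'}(p))$. The Schur polynomial has nonnegative coefficients in the monomial basis (it is the generating function for semistandard Young tableaux), so from $|\alpha_{j,K}(p)|\leq 1$ we get $|s_\lambda(A_K(p))|\leq s_\lambda(1,\ldots,1)$. Cauchy's identity $\sum_{\ell(\lambda)\leq m}s_\lambda(\mathbf x)s_\lambda(\mathbf y)=\prod_{i,j=1}^m(1-x_iy_j)^{-1}$, specialized at $x_i=1$ and $y_j=z$, yields
\[
\sum_{\substack{\ell(\lambda)\leq m\\ |\lambda|=j}}s_\lambda(1,\ldots,1)^2=[z^j](1-z)^{-m^2}=d_{m^2}(p^j).
\]
Multiplicativity of $a_{K\times K'}(\cdot)$ on integers coprime to $D_KD_{K'}$ then gives $|a_{K\times K'}(n)|\leq d_{m^2}(n)$ for all such $n$.

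Replacing $\phi$ by a smooth nonnegative majorant if needed, Fourier inversion of $\phi(T\log(n/x))$ reduces the left-hand side of the claimed bound to
\[
\frac{1}{2\pi i T}\int_{c-i\infty}^{c+i\infty}\zeta(s)^{m^2}\,\widehat\phi(s/T)\,x^s\,ds
\]
with $c=1+1/\log x$. Shifting the contour to $\re(s)=1/2$ crosses only the order-$m^2$ pole of $\zeta(s)^{m^2}$ at $s=1$; its residue contributes $\ll_{m,\phi}x(\log x)^{m^2-1}/T$, since $\widehat\phi(1/T)=O(1)$ for $T\geq 1$. The vertical integral on $\re(s)=1/2$ is bounded using the convexity estimate $|\zeta(1/2+it)|\ll(2+|t|)^{1/4}$ and the rapid decay \eqref{eqn:test_fcn_bounds} of $\widehat\phi(s/T)$, giving $\ll_{m,\phi}\sqrt{x}\,T^{m^2/4+O(1)}$, which is absorbed into the stated $\sqrt{x}\,T^{m^2}$. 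The only genuinely new ingredient is the uniform divisor-function majorant $|a_{K\times K'}(n)|\leq d_{m^2}(n)$; the rest is a textbook Perron/Mellin analysis, and all the estimates produced are independent of the specific pair $K,K'\in\mathfrak F(Q)$.
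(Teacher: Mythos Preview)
Your proof is correct and follows essentially the same approach as the paper: both establish the key majorant $|a_{K\times K'}(n)|\le d_{m^2}(n)$ for $(n,D_KD_{K'})=1$ via the Schur--Cauchy identity, and then reduce to a standard estimate. The only cosmetic difference is that the paper uses the compact support of $\phi$ to pass to the short-interval sum $\sum_{n\in(x,xe^{1/T}]}d_{m^2}(n)$ before invoking the ``standard calculation,'' whereas you carry out that calculation directly via contour shifting against $\zeta(s)^{m^2}$; either route yields the stated bound.
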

\begin{proof}
Since $|\alpha_{j,K}(p)|$ and $|\alpha_{j',K'}(p)|$ are at most 1, we have $|\lambda_{K}(n)\lambda_{K'}(n)| \leq d_{m^2}(n)$, where $d_{m^2}(n)$ is the $n$-th Dirichlet coefficient of $\zeta(s)^{m^2}$.  Therefore, the desired sum is at most
\[
\frac{1}{2\pi i T}\int_{3-i\infty}^{3+i\infty}\zeta(s)^{m^2}\hat{\phi}(s)ds\ll_{m,\phi}\frac{x(\log x)^{m^{2}-1}}{T}+\frac{\sqrt{x}}{T}\int_{\frac{1}{2}-i\infty}^{\frac{1}{2}+i\infty}|\zeta(s)|^{m^2}|\widehat{\phi}(s)|\cdot|ds|,
\]
which is bounded as claimed using the convexity bound $|\zeta(\frac{1}{2}+it)|\ll(|t|+1)^{1/4}$ and \eqref{eqn:test_fcn_bounds}.
\end{proof}

We are now in position to prove our large sieve inequality.

\begin{proof}[Proof of \cref{thm:pre_large_sieve}]
It suffices to assume that $\sum_{n\in(x,xe^{1/T}]}|b(n)|^2=1$.  By the duality principle for bilinear forms, we have
\begin{equation}
\label{eqn:ratio_11}
\sum_{K\in\mathfrak{F}_G(Q)}\Big|\sum_{\substack{n \in(x,xe^{1/T}]}}\lambda_{K}(n)b(n)\Big|^2\leq \sup_{\|\beta\|_2=1}\sum_{n \in(x,xe^{1/T}] }\Big|\sum_{K\in\mathfrak{F}_G(Q)}\lambda_{K}(n)\beta(K)\Big|^2,
\end{equation}
where $\beta$ ranges over the functions from $\mathfrak{F}_G(Q)$ to $\mathbb{C}$ that satisfy $\sum_{K\in\mathfrak{F}_G(Q)}|\beta(K)|^2=1$.  Fix a nonnegative smooth function $\phi$, supported on a compact subset of $[-2,2]$, such that $\phi(T\log\frac{t}{x})$ is a pointwise upper bound for the indicator function of $(x,xe^{1/T}]$.  Then \eqref{eqn:ratio_11} is
\begin{equation}
\label{eqn:ratio_3}
\leq \sup_{\|\beta\|_2=1}\sum_{n\geq 1}\Big|\sum_{K\in\mathfrak{F}_G(Q)}\lambda_{K}(n)\beta(K)\Big|^2 \phi\Big(T\log\frac{n}{x}\Big).
\end{equation}
We expand the square and swap the order of summation; since $\lambda_{K}(n)\in\R$, \eqref{eqn:ratio_3} equals
\begin{equation}
\label{eqn:ratio_4}
\begin{aligned}
&\sup_{\|\beta\|_2=1}\sum_{K,K'\in\mathfrak{F}_G(Q)}\beta(K)\overline{\beta(K')}\Big(\sum_{n\geq 1}\lambda_{K}(n)\lambda_{K'}(n)\phi\Big(T\log\frac{n}{x}\Big)\Big)\\
&\leq \sup_{\|\beta\|_2=1}\sum_{K,K'\in\mathfrak{F}_G(Q)}\frac{|\beta(K)|^2+|\beta(K')|^2}{2}\Big|\sum_{n\geq 1}\lambda_{K}(n)\lambda_{K'}(n)\phi\Big(T\log\frac{n}{x}\Big)\Big|\\
&=\sup_{\|\beta\|_2=1}\sum_{K'\in\mathfrak{F}_G(Q)}|\beta(K')|^2\sum_{K\in\mathfrak{F}_G(Q)}\Big|\sum_{n\geq 1}\lambda_{K}(n)\lambda_{K'}(n)\phi\Big(T\log\frac{n}{x}\Big)\Big|\\
&\leq \max_{K'\in\mathfrak{F}_G(Q)}\sum_{K\in\mathfrak{F}_G(Q)}\Big|\sum_{n\geq 1}\lambda_{K}(n)\lambda_{K'}(n)\phi\Big(T\log\frac{n}{x}\Big)\Big|.
\end{aligned}
\end{equation}
To estimate the inner sum in \eqref{eqn:ratio_4}, we apply \cref{lem:local_density} to the $\#\mathfrak{F}_G(Q)-\mathfrak{m}_{\mathfrak{F}_G}(Q)$ fields $K\in\mathfrak{F}_G(Q)$ such that $K\cap K'=\Q$, and we apply \cref{lem:trivial_bound_coeffs} to the remaining $\mathfrak{m}_{\mathfrak{F}_G}(Q)$ fields.  Thus, \eqref{eqn:ratio_4} is
\[
\ll_{m,\epsilon}\mathfrak{m}_{\mathfrak{F}_G}(Q)\frac{x(\log x)^{m^2-1}}{T}+\sqrt{x}T^{\frac{m^2}{4}}(\mathfrak{m}_{\mathfrak{F}_G}(Q)+(\#\mathfrak{F}_G(Q)-\mathfrak{m}_{\mathfrak{F}_G}(Q))Q^{\frac{m}{2}+\epsilon}T^{\epsilon}(\log x)^{B_m}).
\]
We have $1\leq \mathfrak{m}_{\mathfrak{F}_G}(Q)\leq\#\mathfrak{F}_G(Q)$, and $\#\mathfrak{F}_G(Q)\ll_{|G|} Q$ when $|G|\geq 5$ \cite[Proposition 1.3]{EV2}, $|G|=4$ \cite[Theorems 1 and 4]{MR2183288}, $|G|=3$ \cite{MR491593}, and $|G|=2$ (classical).  The theorem follows.
\end{proof}

\begin{corollary}
\label{cor:MVT_primes}
	Let $Q,T\geq 1$ and $\epsilon>0$.  If $y\geq Q^{m+2+\epsilon}T^{\frac{m^2}{2}+2+\epsilon}$ and $u\in[y,y^{12000}]$, then
	\[
	\sum_{K\in\mathfrak{F}_G(Q)}\int_{-T}^{T}\Big|\sum_{y< p \leq u}\frac{\lambda_{K}(p)\log p}{p^{1+it}}\Big|^2 dt\ll_{m,\epsilon} \mathfrak{m}_{\kF_G}(Q)(\log y)^{m^{2}-1}(\log u)^2.
	\] 
\end{corollary}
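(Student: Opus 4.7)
The plan is to apply Gallagher's mean value theorem to convert the integral over $t\in[-T,T]$ into an integral of short Dirichlet sums in $x$, and then apply \cref{thm:pre_large_sieve} fibrewise with the coefficient function $b(n)=(\log n)/n$ when $n$ is prime and $b(n)=0$ otherwise. As a preliminary step, I would separate the contribution of primes $p\mid D_K$, which \cref{thm:pre_large_sieve} excludes. Since $\omega(D_K)\ll\log Q$, the corresponding ramified Dirichlet polynomial is pointwise bounded by $\ll m(\log Q)^2/y$; after squaring, integrating over $[-T,T]$, and summing over $K\in\kF(Q)$ using the Ellenberg--Venkatesh bound $\#\kF(Q)\ll_m Q^{52(m+1)}$ from the proof of \cref{thm:pre_large_sieve}, the total ramified contribution is $\ll_m TQ^{52(m+1)}(\log Q)^4/y^2$, which is negligible under the hypothesis $y\gg_m(QT)^{108(m+1)}$.

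For the unramified contribution, Gallagher's lemma gives
\[
\int_{-T}^{T}\Big|\sum_{\substack{y<p\leq u\\(p,D_K)=1}}\frac{a_K(p)\log p}{p^{1+it}}\Big|^2\,dt\ll T^2\int_0^{\infty}\Big|\sum_{\substack{x<p\leq xe^{1/T}\\(p,D_K)=1}}\frac{a_K(p)\log p}{p}\Big|^2\frac{dx}{x}.
\]
Summing over $K$, interchanging, and applying \cref{thm:pre_large_sieve} together with the Brun--Titchmarsh estimate $\sum_{x<p\leq xe^{1/T}}(\log p)^2/p^2\ll(\log x)/(Tx)$ shows that the inner sum over $K$ of squares is
\[
\ll_m\frac{(\log x)^{m^2+1}}{T^2}\mathfrak{m}_{\kF}(Q)+\frac{Q^{54(m+1)}T^{m^2-1}\log x}{\sqrt{x}}.
\]
Multiplying by $T^2$ and integrating against $dx/x$ over the effective range (essentially $y/e\leq x\leq u$) reduces matters to evaluating the two one-variable integrals $\mathfrak{m}_{\kF}(Q)\int_y^u(\log x)^{m^2+1}x^{-1}\,dx$ and $T^{m^2+1}Q^{54(m+1)}\int_y^u(\log x)x^{-3/2}\,dx$.

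The first integral is $\ll_m(\log u)^{m^2+2}$; using $u\leq y^{12000}$ and $m^2+1\leq 2m^2$ (since $m\geq 1$), this rewrites as $(\log u)^{m^2+1}\log u\ll_m(\log y)^{m^2+1}\log u\leq(\log y)^{2m^2}\log u$, matching the target. The second integral is $\ll(\log y)/\sqrt{y}$, so its total contribution is $\ll Q^{54(m+1)}T^{m^2+1}(\log y)/\sqrt{y}$; the hypothesis $y\gg_m(QT)^{108(m+1)}$ forces $\sqrt{y}\gg(QT)^{54(m+1)}$, bounding this by $\ll T^{m^2-53m-53}\log y\ll\log y$, which is absorbed since $\mathfrak{m}_{\kF}(Q)(\log y)^{2m^2}\log u\geq\log y$. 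The main balance to verify is that the Brun--Titchmarsh saving $(\log x)/(Tx)$ in $\sum|b(n)|^2$ combines with the $1/T$ in \cref{thm:pre_large_sieve} to produce exactly the $1/T^2$ needed to cancel the $T^2$ from Gallagher's lemma, and that the quantitative hypothesis $y\gg_m(QT)^{108(m+1)}$ is precisely calibrated to suppress the ``trivial'' second error term of \cref{thm:pre_large_sieve}.
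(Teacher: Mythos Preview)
Your proof is correct and follows the same strategy as the paper: Gallagher's lemma followed by \cref{thm:pre_large_sieve}. The paper's only simplification is that, instead of invoking Brun--Titchmarsh to bound $\sum_{x<p\le xe^{1/T}}|b(p)|^2$ pointwise in $x$, it swaps the $x$-integral with the $n$-sum directly (each $n$ contributes over $x\in[ne^{-1/T},n)$, an interval of length $\asymp n/T$), obtaining
\[
\ll_m \sum_{n}|b(n)|^2\, n\Big((\log n)^{m^2}\mathfrak{m}_{\kF}(Q)+\frac{Q^{54(m+1)}T^{m^2}}{\sqrt{n}}\Big),
\]
which avoids any appeal to prime counting in short intervals; your explicit treatment of the ramified primes $p\mid D_K$ is a detail the paper silently omits.
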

\begin{proof}
	Gallagher \cite[Theorem 1]{Gallagher} proved that if $\sum_{n=1}^{\infty}|c(n)|<\infty$, then
	\[
	\int_{-T}^{T}\Big|\sum_{n}c(n) n^{-it}\Big|^2 dt\ll T^2\int_0^{\infty}\Big|\sum_{n \in(x,xe^{1/T}]}c(n)\Big|^2\frac{dx}{x}.
	\]
	Let $b(n)$ be a function with finite support, and choose $c(n)=\lambda_{K}(n)b(n)$, in which case
	\[
	\sum_{K\in\mathfrak{F}_G(Q)}\int_{-T}^{T}\Big|\sum_{n\geq 1}\lambda_{K}(n)b(n) n^{-it}\Big|^2 dt\ll T^2\int_0^{\infty}\sum_{K\in\mathfrak{F}_G(Q)}\Big|\sum_{\substack{n \in(x,xe^{1/T}]}}\lambda_{K}(n)b(n)\Big|^2\frac{dx}{x}.
	\]
	We apply \cref{thm:pre_large_sieve} (with $\epsilon$ replaced by $\frac{\epsilon}{4}$) and $\mathfrak{m}_{\mathfrak{F}_G}(Q)\geq 1$ to bound the above display by
	\begin{multline*}
	\ll_{m,\epsilon} \mathfrak{m}_{\mathfrak{F}_G}(Q)\sum_{n=1}^{\infty}|b(n)|^2 n (\log en)^{m^{2}-1} (1+Q^{\frac{m}{2}+1+\frac{\epsilon}{4}}T^{\frac{m^2}{4}+1+\frac{\epsilon}{4}}n^{-\frac{1}{2}}(\log  en)^{B_m+1-m^2})\\
	\ll_{m,\epsilon} \mathfrak{m}_{\mathfrak{F}_G}(Q)\sum_{n=1}^{\infty}|b(n)|^2 n (\log en)^{m^{2}-1} (1+Q^{\frac{m}{2}+1+\frac{\epsilon}{4}}T^{\frac{m^2}{4}+1+\frac{\epsilon}{4}}n^{\frac{\epsilon}{12m^2+4\epsilon}-\frac{1}{2}}).
	\end{multline*}
	Let $y\geq Q^{m+2+\epsilon}T^{\frac{m^2}{2}+2+\epsilon}$ and $u\in[y,y^{12000}]$, and choose $b(n)=\frac{\log n}{n}$ when $n\in[y,u]$ is prime and $b(n)=0$ otherwise.  With these selections, the above display is
	\[
	\ll_{m,\epsilon}\sum_{K\in\mathfrak{F}_G(Q)}\int_{-T}^{T}\Big|\sum_{y< p \leq u}\frac{\lambda_{K}(p)\log p}{p^{1+it}}\Big|^2 dt\ll_{m}\mathfrak{m}_{\kF_G}(Q)(\log y)^{m^{2}-1}\sum_{p \leq u}\frac{(\log p)^2}{p}.
	\]
	Since $\sum_{p \leq u}\frac{(\log p)^2}{p}\ll(\log u)^2$ by partial summation and Mertens's theorem, the result follows.
\end{proof}

\section{Proof of \cref{thm:ZDE-Artin}}
\label{sec:ZDE}

In this section, we use \cref{cor:MVT_primes} to prove \cref{thm:ZDE-Artin}.  Our approach closely follows the work of Soundararajan and Thorner \cite{ST}.  Indeed, by the results in \cref{sec:Artin}, it follows that for each $K\in\mathfrak{F}_G$, the $L$-function $L(s,\chi_K)$ is in the class $\mathcal{S}(m)$ described in \cite[Subsections 1.1-1.4]{ST}, where $m=|G|-1$ (and the analytic conductor is $D_K e^{O(m)}$).  We will rely heavily on the results in \cite{ST}, though we require some minor modifications.

Let $K\in\mathfrak{F}_G(Q)$.  We have the Dirichlet series identity
\begin{equation}
\label{eqn:lambda_def}	
-\frac{L'}{L}(s,\chi_K) = \sum_{n}\frac{a_{K}(n) \Lambda(n)}{n^s}=\sum_{k=1}^{\infty}\sum_{p}\frac{\sum_{j=1}^{m} \alpha_{j,K}(p)^k\log p}{p^{ks}},\qquad \re(s)>1,
\end{equation}
where $\Lambda(n)$ is the usual von Mangoldt function.  Since $|\alpha_{j,K}(p)|\leq 1$ always, it follows that
\begin{equation}
	\label{eqn:mertens}
	\sum_{n=1}^{\infty}\frac{|a_{K}(n) \Lambda(n)|}{n^{1+\eta}}\leq m \sum_{n=1}^{\infty}\frac{\Lambda(n)}{n^{1+\eta}}\leq \frac{m}{\eta},\qquad \eta>0.
\end{equation}

By hypothesis, $\Lambda(s,\chi_K)$ is entire of order 1 and has the Hadamard product representation
\[
\Lambda(s,\chi_K)=e^{a_{K}+b_{K}s}\prod_{\rho}\Big(1-\frac{s}{\rho}\Big)e^{s/\rho},
\]
where $\rho$ ranges over the nontrivial zeros of $L(s,\chi_K)$.  Such zeros $\rho=\beta+i\gamma$ satisfy $0<\beta<1$.

\begin{lemma}
\label{lem:3.1}
	 If $0<\eta\leq 1$ and $t\in\R$, then
	\[
	\sum_{\rho}\frac{1+\eta-\beta}{|1+\eta+it-\rho|^2}\leq 2m\log D_K +m\log(2+|t|)+\frac{2n}{\eta}+O_m(1)
	\]
	and $\#\{\rho\colon |\rho-(1+it)|\leq\eta\}\leq 10\eta n\log  D_K +5\eta n\log(2+|t|)+O(m^2)$.
\end{lemma}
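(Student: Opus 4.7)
The plan is to follow the standard explicit-formula template, adapted from \cite{ST}. Starting from \eqref{eqn:ext_Lambda_def} and the Hadamard product
\[
\Lambda(s,\chi_K) = e^{a_K+b_K s}\prod_\rho\Big(1-\frac{s}{\rho}\Big)e^{s/\rho},
\]
I would equate logarithmic derivatives of the two representations of $\Lambda(s,\chi_K)$ to obtain
\[
\frac{L'}{L}(s,\chi_K) = -\frac{1}{2}\log q_{\chi_K} - \frac{L_\infty'}{L_\infty}(s,\chi_K) + b_K + \sum_\rho\Big(\frac{1}{s-\rho}+\frac{1}{\rho}\Big).
\]
Because $\chi_K = \reg_G - 1_G$ is real-valued and hence self-dual, the functional equation together with the pairing $\rho \mapsto 1-\bar\rho$ of the zero set yields the standard cancellation $\Re(b_K) + \sum_\rho \Re(1/\rho) = 0$. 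Taking real parts at $s = 1+\eta+it$ would then give the clean identity
\[
\sum_\rho \frac{1+\eta-\beta}{|1+\eta+it-\rho|^2} = \Re\frac{L'}{L}(1+\eta+it,\chi_K) + \frac{1}{2}\log q_{\chi_K} + \Re\frac{L_\infty'}{L_\infty}(1+\eta+it,\chi_K).
\]

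Next I would bound each term on the right individually. The Dirichlet series for $-\frac{L'}{L}(s,\chi_K)$ defined via \eqref{eqn:lambda_def}, combined with \eqref{eqn:mertens} at $\eta > 0$, controls the log-derivative term by $m/\eta$. The conductor term is majorized by $\tfrac{1}{2}\log C(\chi_K)$ by \eqref{eqn:analytic_conductor_def}. Stirling's formula applied to the $m$ shifted gamma factors making up $L_\infty(s,\chi_K)$ yields $\Re\frac{L_\infty'}{L_\infty}(1+\eta+it,\chi_K) \leq \tfrac{m}{2}\log(2+|t|) + O(m)$. Summing these estimates produces the first inequality; the explicit constants $2m$, $m$, $2n/\eta$, and $O(m^2)$ in the statement are generous enough to absorb the true coefficients with ample slack.

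The second inequality would follow from the first by a pointwise argument exploiting positivity. Each summand on the left of the first bound is non-negative, and if $|\rho - (1+it)|\leq\eta$ then $|1+\eta+it-\rho|^2 \leq 4\eta^2$ while $1+\eta-\beta > \eta$, so each such summand is bounded below by $1/(4\eta)$. Therefore the number of zeros within distance $\eta$ of $1+it$ is at most $4\eta$ (or $5\eta$, allowing slack) times the first bound. The only subtlety in the whole argument is verifying the functional-equation cancellation $\Re(b_K) + \sum_\rho \Re(1/\rho) = 0$, which depends crucially on the self-duality of $\chi_K$ and the symmetry of its zeros; beyond that, the proof reduces to standard bookkeeping of Stirling estimates and absolute-convergence bounds.
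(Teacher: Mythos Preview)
Your proposal is correct and is essentially the same argument as the paper's, which simply cites \cite[Lemma~3.1]{ST} applied to $L(s,\chi_K)$; you have spelled out the standard Hadamard--Stirling computation that underlies that citation. One small remark: the cancellation $\Re(b_K)+\sum_\rho\Re(1/\rho)=0$ follows already from the functional equation and the symmetry $\rho\mapsto 1-\bar\rho$ of the zero set, so self-duality of $\chi_K$ is convenient but not strictly ``crucial'' here.
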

\begin{proof}
This is \cite[Lemma 3.1]{ST} applied to $L(s,\chi_K)$.
\end{proof}

\subsection{Detecting zeros} Recall $K \in \kF_G(Q)$ so $D_K \leq Q$. Let $k\geq 1$ be an integer, and let
\begin{equation}
\label{eqn:eta_range}
\frac{1}{\log QT}<\eta\leq \frac{1}{220m}.
\end{equation}
Let $s_0 = 1+\eta+i\tau$.  By \cite[(4.2)]{ST} and the fact that $L(s,\chi_K)$ is entire, we find that
\begin{equation}
	\label{eqn:4.2}
	\Big|\frac{(-1)^k}{k!}\Big(\frac{L'}{L}(s_0,\chi_K)\Big)^{(k)}-\sum_{|s_0-\rho|<200\eta}\frac{1}{(s_0-\rho)^{k+1}}\Big|\ll_m \frac{\log(QT)}{(200\eta)^k}.
\end{equation}

\begin{lemma}
	\label{lem:4.2}
	Let $\tau\in\R$ satisfy $|\tau|\leq T$, and let $\eta$ satisfy \eqref{eqn:eta_range}.  If $L(s,\chi_K)$ has a zero $\rho_0$ with $|1+i\tau-\rho_0|\leq\eta$ and $M>\lceil 2000\eta m \log(QT)+O_m(1)\rceil$, then there exists $k\in[M,2M]\cap\Z$ such that
	\[
	\Big|\sum_{\substack{\rho \\ |s_0-\rho|\leq200\eta}}\frac{1}{(s_0-\rho)^{k+1}}\Big|\geq\frac{1}{(100\eta)^{k+1}}.
	\]
\end{lemma}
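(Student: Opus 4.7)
The plan is to apply Turán's power sum method (the second main theorem on power sums) to the reciprocals $z_j = 1/(s_0-\rho_j)$, where $\rho_j$ ranges over the nontrivial zeros of $L(s,\chi_K)$ lying in the disc $|s_0-\rho| \leq 200\eta$. First, I would enumerate these zeros as $\rho_1,\ldots,\rho_N$, including $\rho_0$ among them. By the triangle inequality $|s_0-\rho_0| \leq |s_0-(1+i\tau)| + |(1+i\tau)-\rho_0| \leq \eta + \eta = 2\eta$, so $|z_j| \geq 1/(200\eta)$ for every $j$, and the one corresponding to $\rho_0$ satisfies $|z_j| \geq 1/(2\eta)$. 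After relabelling so that $|z_1| = \max_j|z_j|$, we still have $|z_1| \geq 1/(2\eta)$.

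Next I would bound $N$ via \cref{lem:3.1}. Since $|\rho - (1+i\tau)| \leq |s_0-\rho| + \eta \leq 201\eta$ for every $\rho$ in our list, applying the second estimate of \cref{lem:3.1} centered at $1+i\tau$ with radius $201\eta$ gives $N \leq C_0 \eta m\log(QT) + O(m^2)$ for an absolute constant $C_0$. The hypothesis $M > \lceil 2000\eta m\log(QT) + O(m^2)\rceil$, with sufficiently large implicit constants absorbed into the error, ensures that $N \leq M$.

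Then I would invoke Turán's second main theorem on power sums: for any integer $M \geq 0$ and any complex numbers $z_1,\ldots,z_N$ with $|z_1| \geq |z_j|$ for all $j$, there exists an integer $\nu \in [M+1, M+N]$ with
\[
\Big|\sum_{j=1}^N z_j^{\nu}\Big| \geq \Big(\frac{N}{2e(M+N)}\Big)^N |z_1|^{\nu}.
\]
Setting $k = \nu - 1$, the range $k \in [M, M+N-1]$ is contained in $[M, 2M]$ (using $N \leq M+1$), and the left-hand side equals $|\sum_{|s_0-\rho|\leq 200\eta}(s_0-\rho)^{-(k+1)}|$, which is the quantity to be bounded. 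Using $|z_1| \geq 1/(2\eta)$, the right-hand side exceeds $(2e(M+N)/N)^{-N}(2\eta)^{-\nu}$, so the desired lower bound $(100\eta)^{-(k+1)}$ follows provided
\[
N\log\bigl(2e(M+N)/N\bigr) \leq \nu \log 50.
\]
With $N \leq M \leq \nu$, the left side is at most $M\log(4e)$, and $\log(4e) = 1 + \log 4 < \log 50$, so the inequality holds comfortably.

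The main obstacle, such as it is, lies in matching constants between \cref{lem:3.1} and the hypothesis $M > 2000\eta m\log(QT) + O(m^2)$; this is a bookkeeping issue that is handled by taking the implicit constant in the $O(m^2)$ term large enough. The core of the argument—converting a zero close to $1+i\tau$ into a detectable spike in the power sums of $(s_0-\rho)^{-1}$—is a direct application of Turán's theorem, which is well suited because the zero $\rho_0$ produces a dominant reciprocal $z_1$ that is at least a factor $100$ larger in modulus than the threshold $(100\eta)^{-1}$.
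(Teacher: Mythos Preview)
Your approach is correct and is exactly the argument that underlies the cited reference: the paper does not prove \cref{lem:4.2} directly but simply invokes \cite[Lemma~4.2]{ST}, and that lemma is proved precisely by Tur\'an's second power sum theorem applied to the reciprocals $z_j=(s_0-\rho_j)^{-1}$, with the zero count in the disc controlled by the analogue of \cref{lem:3.1}. Your observation that the constant-matching between \cref{lem:3.1} (applied at radius $201\eta$) and the hypothesis $M>2000\eta m\log(QT)+O(m^2)$ is a bookkeeping issue is accurate; the coefficient $2000$ is inherited from \cite{ST} rather than rederived here, so any slight discrepancy would be absorbed by adjusting the implicit constants (and ultimately the factor $10^7$ in \cref{thm:LFZDE-Artin}).
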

\begin{proof}
This is \cite[Lemma 4.2]{ST} applied to $L(s,\chi_K)$.
\end{proof}

\begin{lemma}
\label{lem:prime_power_contribution}
	Let $K\in\mathfrak{F}_G(Q)$.  Let $\tau\in\R$ satisfy $|\tau|\leq T$, and let $\eta$ satisfy \eqref{eqn:eta_range}.  Let $M\geq 1$ and $k\in[M,2M]$ be integers, $N_0=\exp(M/(300\eta))$, and $N_1=\exp(40M/\eta)$.  We have that
	\[
	\Big|\frac{\eta^{k+1}}{k!}\Big(\frac{L'}{L}(s_0,\chi_K)\Big)^{(k)}\Big|\leq \eta^2\int_{N_0}^{N_1}\Big|\sum_{N_0\leq  p \leq u}\frac{\lambda_{K}(p)\log p}{p^{1+i\tau}}\Big|\frac{du}{u}+O_m\Big(\frac{\eta \log(QT)}{(110)^k}\Big).
	\]
	\end{lemma}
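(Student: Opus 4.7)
My plan has three stages. First, differentiate the Dirichlet series $-L'/L(s,\chi_K)=\sum_n\lambda_K(n)\Lambda(n)n^{-s}$ (see \eqref{eqn:lambda_def}) termwise $k$ times and multiply by $\eta^{k+1}/k!$ to obtain
\[
\frac{\eta^{k+1}}{k!}\Big(\frac{L'}{L}(s_0,\chi_K)\Big)^{(k)} = (-1)^{k+1}\sum_n \lambda_K(n)\Lambda(n)\,w(\log n)\,n^{-1-i\tau},
\]
where $w(v) := \eta^{k+1} v^k e^{-\eta v}/k!$ equals $\eta$ times a $\mathrm{Gamma}(k+1,\eta)$ density, peaked near $v = k/\eta$. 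Splitting $n = p^j$, the $j = 1$ terms yield $\sum_p w(\log p)\cdot a_K(p)\log p/p^{1+i\tau}$ (the main term), while for $j \geq 2$ I bound $|\lambda_K(p^j)|\le m$ and use the Mellin identity $\int_0^\infty w(v) e^{-\alpha v}dv = (\eta/(\eta+\alpha))^{k+1}$ to estimate $\sum_p w(j\log p)\log p/p^j \approx \tfrac{1}{j}(\eta/(\eta+1-1/j))^{k+1}$. Since $\eta \leq 1/(220m)$ forces $\eta/(\eta+1/2) \leq 1/111$, the dominant $j=2$ term produces a contribution of size $O(m/111^k)$, which absorbs into the stated error $O(\eta m\log(QT)/110^k)$ in the relevant range of parameters.

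Second, for the prime contribution I apply partial summation with $A(u) := \sum_{N_0 \le p \le u} a_K(p)\log p/p^{1+i\tau}$:
\[
\sum_{N_0 \le p \le N_1} w(\log p)\frac{a_K(p)\log p}{p^{1+i\tau}} = w(\log N_1) A(N_1) - \int_{N_0}^{N_1} A(u)\, w'(\log u)\,\frac{du}{u}.
\]
A direct computation yields $w'(v) = w(v)(k/v - \eta)$, and parametrizing $c = k/(\eta v)$, Stirling's formula gives $|w'(v)| = (1+o(1))\eta^2|c-1|g(c)^k/\sqrt{2\pi k}$, where $g(c) := e^{1-1/c}/c$ is maximized at $c=1$ with $g(1)=1$ and decays exponentially away from this point. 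The ranges $N_0 = \exp(M/(300\eta))$, $N_1 = \exp(40M/\eta)$, and $k \in [M,2M]$ confine $c$ to $[k/(40M), 300k/M]$, on which $|c-1|g(c)^k/\sqrt{2\pi k} \leq 1$ by a short Taylor/Stirling analysis; thus $|w'(\log u)| \leq \eta^2$ uniformly there, producing the main term $\eta^2\int_{N_0}^{N_1}|A(u)|du/u$.

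Finally, I must absorb the boundary term $w(\log N_1)A(N_1)$ and the contributions of primes outside $[N_0,N_1]$ into the stated error. For $p < N_0$, $\eta\log p \leq k/300$, and Stirling gives $w(\log p) \leq \eta(e/300)^k$; the tight inequality $e/300 < 1/110$ is precisely what selects the base $110$ in the error. The crude bound $|a_K(p)|\leq m$ together with $\sum_{p\leq N_0}\log p/p \ll \log N_0$ then controls this contribution. For $p > N_1$, one has $c \leq 1/20$, making $w(\log p) \ll (40e^{-39})^k$ astronomically small, so both the high-prime tail and the boundary term are negligible. I expect the main technical difficulty will be tracking constants so that the three error contributions (prime powers with $j\ge 2$, low-prime tail, and high-prime tail with boundary) all simultaneously fit inside the single term $O(\eta m\log(QT)/110^k)$, which is essentially sharp for this method.
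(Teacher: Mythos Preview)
Your proposal is correct and follows essentially the same route as the paper: differentiate the Dirichlet series termwise, split off primes outside $[N_0,N_1]$ and prime powers as error terms, and apply partial summation to the primes in $[N_0,N_1]$. The only differences are tactical: the paper bounds the tails via the elementary inequality $(\eta\log n)^k/k!\le n^{-\eta/2}(110)^{-k}$ (quoted from \cite{ST}) and handles composites in $[N_0,N_1]$ with the crude bound $(\log u)^k\le k!u$, whereas you derive equivalent bounds directly via Stirling and a comparison with the Gamma integral; and in the partial summation step the paper carries an auxiliary error $\sum_{p\in[N_0,N_1]}|\lambda_K(p)|\eta^k(\log p)^{k+1}/(p\,k!)$ which it then reabsorbs via \eqref{eqn:mertens}, while your uniform bound $|w'(v)|\le\eta^2$ yields the main term $\eta^2\int|A(u)|\,du/u$ in one clean stroke.
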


	\begin{proof}
We follow the proof of \cite[Lemma 4.3]{ST}.  Since $\eta>0$, one has
\[
\Big|\frac{\eta^{k+1}}{k!}\Big(\frac{L'}{L}(s_0,\chi_K)\Big)^{(k)}\Big|=\eta\Big|\sum_{n}\frac{a_{K}(n) \Lambda(n)}{n^{1+i\tau}}j_k(\eta\log n)\Big|,\qquad j_k(u) = e^{-u}\frac{u^k}{k!}.
\]
If $n \notin(N_0,N_1)$, then $j_k(\eta\log n)\leq  n^{-\eta/2}(110)^{-k}$ \cite[Proof of Lemma 4.3]{ST}.  So by \eqref{eqn:mertens},
\begin{equation}
\label{eqn:prime_power_contribution}
\Big|\sum_{n \notin[N_0,N_1]}\frac{a_{K}(n) \Lambda(n)}{n^{1+i\tau}}j_k(\eta\log n)\Big|\ll \frac{1}{(110)^k}\sum_{n}\frac{|a_{K}(n) \Lambda(n)|}{n^{1+\eta/2}}\ll_m \frac{\log(QT)}{(110)^k}.
\end{equation}
To handle the composite $n\in[N_0,N_1]$, note that since $(\log u)^k\leq k!u$ for all $k,u\geq 1$, we have
\[
j_k(\eta\log p^r) = p^{-r\eta}\frac{(\eta\log  p^r)^k}{k!}=p^{-r\eta}(2\eta)^k\frac{(\log  p^{\frac{r}{2}})^k}{k!}\leq (2\eta)^k  p^{r(\frac{1}{2}-\eta)}\leq\frac{p^{r(\frac{1}{2}-\eta)}}{(110)^k}.
\]
Since $|a_{K}(n)|\leq m$, it follows from \eqref{eqn:mertens} that
\begin{equation}
\label{eqn:prime_power_contribution'}
	\Big|\sum_{\substack{n \in[N_0,N_1] \\ \textup{$n$ composite}}}\frac{a_{K}(n) \Lambda(n)}{n^{1+i\tau}}j_k(\eta\log n)\Big|\ll_m \frac{1}{(110)^k}\sum_{p}\sum_{r\geq 2}\frac{\log  p }{p^{r(\frac{1}{2}+\eta)}}\ll_m \frac{\log(QT)}{(110)^k}.
\end{equation}

At a prime $p$, we have $a_K(p)\Lambda(p)=\lambda_K(p)\log p$.  Hence, by partial summation,
\begin{multline*}
	\Big|\sum_{p \in[N_0,N_1] }\frac{a_{K}(p) \Lambda(p)}{p^{1+\eta+i\tau}}\frac{(\eta \log  p )^k}{k!}\Big|\leq j_k(\eta\log N_1)\sum_{N_0\leq n\leq N_1}\frac{|\lambda_K(p)|\log p}{p}\\
	+\int_{N_0}^{N_1}\Big|\frac{d}{du}j_k(\eta\log u)\Big|\cdot\Big|\sum_{N_0\leq n\leq u}\frac{\lambda_K(p)\log p}{p^{1+i\tau}}\Big|\frac{du}{u}.
\end{multline*}
By our estimate for $j_k(\eta\log n)$ at the onset and \eqref{eqn:mertens}, the first term is $\ll m\log(QT)(110)^{-k}$.  Now $|\frac{d}{du}(j_k(\eta\log u))|=|-j_k(\eta\log u)+j_{k-1}(\eta\log u)|(\eta/u)\leq\eta/u$, so we conclude that
\begin{equation}
\label{eqn:prime_power_contribution_3}
\Big|\sum_{p \in[N_0,N_1] }\frac{a_{K}(p) \Lambda(p)}{p^{1+\eta+i\tau}}\frac{(\eta \log  p )^k}{k!}\Big|\leq \eta\int_{N_0}^{N_1}\Big|\sum_{N_0\leq p \leq u}\frac{\lambda_{K}(p)\log p}{p^{1+i\tau}}\Big|\frac{du}{u} +O_m\Big(\frac{\log(QT)}{(110)^k}\Big).
\end{equation}
The lemma follows once we combine the contributions from \eqref{eqn:prime_power_contribution}, \eqref{eqn:prime_power_contribution'}, and \eqref{eqn:prime_power_contribution_3}.
\end{proof}

\subsection{Counting zeros}

Our work in the previous subsection produces an upper bound for the count of zeros of $L(s,\chi_K)$ close to the line $\re(s)=1$.

\begin{lemma}
\label{lem:zde_upper}
Under the hypotheses of \cref{lem:prime_power_contribution}, if $M \geq 10^5 m^3\eta\log(QT)+O_{m}(1)$ with a sufficiently large implied constant,
	\begin{align*}
N_{K}(1-\tfrac{\eta}{2},T)&\ll_{|G|} (101)^{4M}M\eta^2 \int_{-T}^{T}\int_{N_0}^{N_1}\Big|\sum_{N_0\leq  p \leq u}\frac{\lambda_{K}(p)\log p}{p^{1+i\tau}}\Big|^2\frac{du}{u}d\tau.
\end{align*}
\end{lemma}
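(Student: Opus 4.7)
The plan is to detect each zero counted by $N_K(1-\eta/2,T)$ at its imaginary part $\tau=\gamma_0$ by chaining \cref{lem:4.2} with \cref{lem:prime_power_contribution}, then to convert the resulting pointwise bounds into the $\tau$-integral appearing in the conclusion via a Sobolev/Gallagher-type inequality. A zero $\rho_0=\beta_0+i\gamma_0$ counted by $N_K(1-\eta/2,T)$ satisfies $|\rho_0-(1+i\gamma_0)|=1-\beta_0\le\eta/2<\eta$, so \cref{lem:4.2} applied at $\tau=\gamma_0$ produces an integer $k=k(\rho_0)\in[M,2M]$ for which $\big|\sum_{|s_0-\rho|\le 200\eta}(s_0-\rho)^{-k-1}\big|\ge(100\eta)^{-k-1}$, with $s_0:=1+\eta+i\gamma_0$.

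Plugging this into \eqref{eqn:4.2} and multiplying through by $\eta^{k+1}$ furnishes a pointwise lower bound for $|\eta^{k+1}(L'/L)^{(k)}(s_0,\chi_K)/k!|$; comparing with the upper bound from \cref{lem:prime_power_contribution} gives
$$\frac{1}{100^{k+1}}-O\!\Big(\frac{\eta m\log(QT)}{200^k}\Big) \le \eta^2\int_{N_0}^{N_1}|F_u(\gamma_0)|\,\frac{du}{u}+O\!\Big(\frac{\eta m\log(QT)}{110^k}\Big),$$
where for brevity $F_u(\tau):=\sum_{N_0\le p\le u}a_K(p)(\log p)/p^{1+i\tau}$. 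The hypothesis $M\ge 10^5 m^3\eta\log(QT)+O_m(1)$ forces $1.1^k$ (and a fortiori $2^k$) to dominate $\eta m\log(QT)$ by any desired polynomial factor, so both $O(\cdot)$ terms are at most half of $100^{-k-1}$, leaving $100^{-k-1}\ll\eta^2\int_{N_0}^{N_1}|F_u(\gamma_0)|\,du/u$. Squaring, applying Cauchy--Schwarz to the $du/u$-integral (whose measure contributes $\log(N_1/N_0)\asymp M/\eta$), using $k\le 2M$, and using $\eta\le 1$, I obtain
\begin{equation}\label{eqn:detector}
1\ll 100^{4M+2}\,M\eta^2\int_{N_0}^{N_1}|F_u(\gamma_0)|^2\,\frac{du}{u}\qquad\text{for every such zero }\rho_0.
\end{equation}

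It then remains to sum \eqref{eqn:detector} over the zeros and pass from the sum of pointwise values $|F_u(\gamma_0)|^2$ to the integral $\int_{-T}^{T}|F_u(\tau)|^2\,d\tau$. By \cref{lem:3.1}, the number of zeros of $L(s,\chi_K)$ with imaginary part in any unit interval is $O_m(\log QT)$, so a pigeonhole argument produces a $1$-separated subset $\{\gamma_{0,j}\}$ of imaginary parts of the zeros counted by $N_K(1-\eta/2,T)$ whose cardinality is $\gg N_K(1-\eta/2,T)/(m\log QT)$. A standard Sobolev/Gallagher-type inequality applied to the smooth function $\tau\mapsto\int_{N_0}^{N_1}|F_u(\tau)|^2\,du/u$, whose derivative is controlled by the frequency bound $|F_u'(\tau)|\ll m(\log N_1)^2\ll (mM/\eta)^2$, bounds the sum of its values at the $1$-separated points $\gamma_{0,j}$ by a constant multiple of $\int_{-T-1}^{T+1}\int_{N_0}^{N_1}|F_u(\tau)|^2\,du/u\,d\tau$. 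Summing \eqref{eqn:detector} over the $1$-separated subset and rearranging yields
$$N_K(1-\eta/2,T)\ll_m (m\log QT)\cdot 100^{4M+2}\,M\eta^2\int_{-T}^{T}\int_{N_0}^{N_1}|F_u(\tau)|^2\,\frac{du}{u}\,d\tau.$$
Since $M\gg m^3\eta\log(QT)$, the ratio $(101/100)^{4M}=\exp(4M\log(101/100))$ grows faster than $(m\log QT)\cdot 100^2$, so the polynomial loss $(m\log QT)\cdot 100^{4M+2}$ is absorbed upon replacing $100^{4M+2}$ by $(101)^{4M}$, delivering the claimed bound.

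The chief obstacle is the Sobolev/Gallagher step: one must carefully verify that passing from the $1$-separated pointwise sum to the $L^2$ integral incurs at most a polynomial loss in $m\log(QT)$, so that the exponential slack provided by the hypothesis on $M$ really does suffice to absorb it. Everything else is routine bookkeeping of error terms enabled by the largeness of $M$.
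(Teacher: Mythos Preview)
Your detector argument (combining \cref{lem:4.2}, \eqref{eqn:4.2}, and \cref{lem:prime_power_contribution}) is correct and matches what the paper (via \cite{ST}) does. The gap is exactly where you flag it: the Sobolev/Gallagher step. Your claim that the sum of $g(\gamma_{0,j}):=\int_{N_0}^{N_1}|F_u(\gamma_{0,j})|^2\,du/u$ over $1$-separated points is bounded by a \emph{constant} multiple of $\int_{-T-1}^{T+1}g(\tau)\,d\tau$ is not justified. The Sobolev inequality produces an extra $\int|g'(\tau)|\,d\tau$, and the pointwise frequency bound $|F_u'(\tau)|\ll m(\log N_1)^2$ only yields $|g'(\tau)|\ll m(\log N_1)^{5/2}g(\tau)^{1/2}$; after Cauchy--Schwarz in $\tau$ this picks up a factor of $T^{1/2}$, and any attempt to compare $\int|F_u'|^2\,d\tau$ with $\int|F_u|^2\,d\tau$ via the mean value theorem runs into the fact that $N_0\gg T$. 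Even if one can extract a Sobolev loss of order $\log N_1\asymp M/\eta$, the resulting bound $N_K\ll_m (\log QT)\,100^{4M}M^2\eta\int\!\!\int$ need not be absorbed by $(101/100)^{4M}$: when $\eta$ is near $1/\log(QT)$ one has $M\asymp_m 1$ while $\log QT$ is unbounded.

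The approach in \cite{ST}, which the paper invokes, sidesteps this entirely by exploiting a feature of \cref{lem:4.2} you did not use: the detector works not only at $\tau=\gamma_0$ but at \emph{every} $\tau$ with $|\rho_0-(1+i\tau)|\le\eta$, in particular for all $\tau\in[\gamma_0-\eta/2,\gamma_0+\eta/2]$. Thus your inequality $100^{-4M-2}\ll M\eta^3\int_{N_0}^{N_1}|F_u(\tau)|^2\,du/u$ (before you discard the factor $\eta$) holds on this whole interval; integrating it over $\tau$ yields
\[
1\ \ll\ 100^{4M+2}\,M\eta^2\int_{\gamma_0-\eta/2}^{\gamma_0+\eta/2}\int_{N_0}^{N_1}|F_u(\tau)|^2\,\frac{du}{u}\,d\tau.
\]
Now choose an $\eta$-separated (not $1$-separated) set of imaginary parts, so these $\tau$-intervals are disjoint and sum directly to $\int_{-T}^{T}$. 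By \cref{lem:3.1} each $\eta$-ball contains $O(\eta m\log(QT)+m^2)\ll_m M$ zeros, so $N_K(1-\eta/2,T)\ll_m 100^{4M+2}M^2\eta^2\int\!\!\int$, and the extra $M$ is absorbed by $(101/100)^{4M}$. No Sobolev step is needed.
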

\begin{proof}
We mimic the proof of \cite[(4.5)]{ST} using \cref{lem:prime_power_contribution} instead of \cite[Lemma 4.3]{ST}.
\end{proof}

\begin{proof}[Proof of \cref{thm:ZDE-Artin}]
Choose $M = 10^5 m^3 \eta \log(QT)+O_{m}(1)$.  Recall that the range of $\eta$ is given by \eqref{eqn:eta_range}, $N_0=\exp(M/(300\eta))$, and $N_1=\exp(40M/\eta)$.  By \cref{lem:zde_upper}, the bound
	\begin{align}
		\label{eqn:estimate_0}
	\sum_{K\in\mathfrak{F}_G(Q)}N_{K}(1-\tfrac{\eta}{2},T)\ll_{m} (101)^{4M}M\eta^2 \int_{N_0}^{N_1}\sum_{K\in\mathfrak{F}_G(Q)}\int_{-T}^{T}\Big|\sum_{N_0\leq  p \leq u}\frac{\lambda_{K}(p)\log p}{p^{1+i\tau}}\Big|^2 \frac{d\tau du}{u}\hspace{-2.7mm}
	\end{align}
holds.  We apply \cref{cor:MVT_primes} with $y=N_0$.  This is valid; when $\epsilon=1$, we have $(10^5m^3)/300  \geq \max\{m+2+\epsilon,\frac{m^2}{2}+2+\epsilon\}$. By \cref{cor:MVT_primes}, \eqref{eqn:estimate_0} is $\ll_{m} \mathfrak{m}_{\kF_G}(Q)(101)^{4M}M^3 (\log QT)^{m^2}$.  Using our particular choices of $M$ and $\eta$ and writing $\sigma=1-\frac{\eta}{2}$, we conclude that
\begin{equation}
\label{eqn:almost_last_step}
\sum_{K\in\mathfrak{F}_G(Q)}N_{K}(\sigma,T)\ll_{m} \mathfrak{m}_{\kF_G}(Q) (QT)^{10^7 m^3(1-\sigma)}(\log QT)^{m^2},\quad 1-\frac{1}{440m}\leq\sigma<1-\frac{1}{2\log(QT)}.
\end{equation}
If $\sigma\geq 1-\frac{1}{2\log(QT)}$, then by the definition \eqref{eqn:NsigmaT} of $N_K(\sigma,T)$ and \eqref{eqn:almost_last_step}, we have that
\[
N_K(\sigma,T)\leq N_K(1-\tfrac{1}{2\log(QT)},T)\ll_{m}\mathfrak{m}_{\mathfrak{F}_G}(Q)(\log QT)^{m^2} \asymp \mathfrak{m}_{\kF_G}(Q) (QT)^{10^7 m^3(1-\sigma)}(\log QT)^{m^2}.
\]
On the other hand, $N_{K}(0,T)\ll_{m} T\log(QT)$ for all $K\in\mathfrak{F}_G(Q)$ \cite[Theorem 5.8]{IK}.  Thus, if $\sigma<1-\frac{1}{440m}$, then our estimate is trivial.  Since $|G|=m+1$, the theorem follows.
\end{proof}
 
\section{The error term in the Chebotarev density theorem}
\label{sec:avg_Chebotarev_error}

\cref{thm:main-Artin} follows from \cref{prop:FlexiError-Li,prop:large-ZFR}, the latter of which will use \cref{thm:ZDE-Artin}.  Let $G$ be a nontrivial finite group, and let $\mathfrak{F}_G$ be as in \cref{thm:ZDE-Artin}.  For $K\in\mathfrak{F}_G$, let $\Delta_{K}:[3,\infty)\to[0,\frac{1}{2}]$ be a function such that $\zeta_K(s)/\zeta(s)\neq 0$ in the region $\re(s)\geq 1-\Delta_K(|\im(s)|+3)$, and define
\[
\eta_K(x):=\inf_{t\geq 3}(\Delta_K(t)\log x+\log t).
\]

Our first of two key propositions for proving \cref{thm:main-Artin} is an unconditional effective form of the Chebotarev density theorem that depends only on $\eta_K(x)$.

\begin{proposition} \label{prop:FlexiError-Li}
Let $K\in\mathfrak{F}_G$, $C\subseteq G$ be a conjugacy class.  If $x\geq \max\{(\log D_K)^{4},5000\}$, then
	\[
	\Big| \pi_C(x,K/\Q) - \frac{|C|}{|G|} \int_2^x\frac{dt}{\log t} \Big| \ll_{|G|} \frac{x}{\log x}(e^{-\frac{1}{3}\sqrt{\log x}}+\Delta^{\frac{1}{2}}+e^{-\frac{1}{4}\eta_K(x)}\log D_K)
	\]
	with
	\[
	\Delta = x^{-\frac{1}{4}}+\min\Big\{ \frac{1}{16}, 8 e^{-\frac{\eta_{K}(x)}{4}}\Big\} + \min\Big\{ \frac{1}{16}, 8 e^{-\frac{\sqrt{\log x}}{6}} \Big\}.
	\]
	\end{proposition}
The ideas are classical but require careful execution.  We borrow from the analysis in \cite[Sections 2 and 4]{TZ3} and refer there for some of the details.  We first introduce a weight function.

\begin{lemma}\label{lem:WeightChoice}

Let $x \geq 5000$, $t\in\R$, and $\Delta$ be as in \cref{prop:FlexiError-Li}.  There exists a continuous function $f(t) = f_{x,\Delta}(t)$ of a real variable $t$ such that:
\begin{enumerate}[(i)]
	\item $0 \leq f(t) \leq 1$ for all $t \in \R$, and $f(t) = 1$ for $\tfrac{1}{2} \leq t \leq 1$.
	\item The support of $f$ is contained in the interval $[\tfrac{1}{2} - \frac{\Delta}{\log x}, 1 +  \frac{\Delta}{\log x}]$. 
	\item Its Laplace transform $F(z) = \int_{\R} f(t) e^{-zt}dt$ is entire and is given by
			\begin{equation*}	
				F(z) = e^{-(1+ \frac{\Delta}{\log x})z} \cdot \Big( \frac{1-e^{(\frac{1}{2}+\frac{\Delta}{\log x})z}}{-z} \Big) \Big( \frac{1-e^{\frac{\Delta z}{2  \log x}}}{- \frac{\Delta z}{2\log x}} \Big)^{2}.
			\end{equation*}
	\item Let $s = \sigma + i t, \sigma > 0,$ and $t \in \R$. Then $|F(-s\log x)|\leq 
		e^{\sigma \Delta} x^{\sigma} \min\{ 1,  \frac{1 + x^{-\sigma/2} }{|s|\log x}  ( \frac{4}{\Delta|s|} )^{2} \}$.
	\item We have $\frac{1}{2} < F(0) < \frac{3}{4}$ and $F(-\log x)  = \frac{x}{\log x} + O(\frac{\Delta x  + x^{1/2}}{\log x} )$.
\end{enumerate}
\end{lemma}	
\begin{proof}
	If $x\geq 5000$, then $0<\Delta<\frac{1}{4}$.  Thus, the result follows from \cite[Lemma 2.2]{TZ3} with $\ell=2$.
\end{proof}

\begin{proof}[Proof of \cref{prop:FlexiError-Li}]
Note that $\log D_K\leq x^{1/4}$ by hypothesis.  Let $\Delta$ be as in \cref{prop:FlexiError-Li}, and let $f(\, \cdot \,) = f_{x,\Delta}( \, \cdot \, )$ be as in \cref{lem:WeightChoice}.  Let $F(\, \cdot \,)=F_{x,\Delta}(\, \cdot \,)$ be the Mellin transform of $f_{x,\Delta}( \, \cdot \, )$.  If $a,b,c>0$, then $1/(a+b+c)<\min\{a^{-1},b^{-1},c^{-1}\}$, so $\Delta^{-2}\ll \min\{x^{1/2},e^{\eta_K(x)/2},e^{\sqrt{\log x}/3}\}$.

Let $K\in\mathfrak{F}_G$, and let $C\subseteq G$ be a conjugacy class.  Let $g\in C$, $H$ be the cyclic group $\langle g\rangle$, and $K^H$ be fixed field of $H$.  Let $F_1(s)=\frac{x^s}{s}$ and $F_2(s)=F(-s\log x)\log x$, and define for $j\in\{1,2\}$
\[
\psi_{j}(x)=\frac{1}{|H|}\sum_{\chi\in\hat{H}}\frac{\bar{\chi}(g)}{2\pi i}\int_{2-i\infty}^{2+i\infty}-\frac{L'}{L}(s,\chi,K^H)F_j(s)ds,
\]
where $L(s,\chi,K^H)$ is the Hecke $L$-function attached to the one-dimensional character $\chi\in\widehat{H}$.  By \cite[Lemmata 2.1, 2.3, and 5.2]{TZ3}, we have
\begin{equation}
\label{eqn:ReductionToAbelian}
	\Big|\pi_C(x,K/\Q)-\frac{|C|}{|G|}|H|\Big(\frac{\psi_1(x)}{\log x}+\int_{\sqrt{x}}^{x}\frac{\psi_1(t)}{t(\log t)^2}dt\Big)\Big|\ll_{|G|}\sqrt{x} +\log D_K\ll \sqrt{x}
\end{equation}
and
\begin{equation}
\label{eqn:ReductionToAbelian2}
|\psi_1(x)-\psi_2(x)|\ll_{|G|} \sqrt{x}+\Delta x.
\end{equation}
Let $\rho$ be a nontrivial zero of $L(s,\chi,K^H)$. By \cref{lem:WeightChoice} and  \cite[Lemmata 4.3, and 4.4]{TZ3}, we have
		\begin{equation}
		\label{eqn:Artin-ContourShift}
		\begin{aligned}
		|H|\frac{\psi_{2}(x)}{\log x} &= F(-\log x)-\sum_{\chi\in\widehat{H}}\bar{\chi}(C_H)\sum_{|\rho|>\frac{1}{4}}F(-\rho\log x)+O_{|G|}\Big(\frac{\Delta^{-2}\log D_K}{x^{1/4}\log x}+\frac{1}{\log x}\Big)\\
		&=\frac{x}{\log x}-\sum_{\chi\in\widehat{H}}\bar{\chi}(C_H)\sum_{|\rho|>\frac{1}{4}}F(-\rho\log x)+O_{|G|}\Big(\frac{\sqrt{x}+\Delta x}{\log x}\Big).
		\end{aligned}\hspace{-2mm}
		\end{equation}
		Since $\prod_{\chi\in\widehat{H}}L(s,\chi,K^H)=\zeta_K(s)= \zeta(s)L(s,\chi_K)$, we can partition the zeros $\rho=\beta+i\gamma$ in \eqref{eqn:Artin-ContourShift} according to whether $\zeta(\rho)=0$ or $L(\rho,\chi_K)=0$ because of \cref{thm:Aramata_Brauer}. If $\zeta(\beta+i\gamma)=0$, then $|\gamma|>14$ and $\beta\leq 1-1/(6\log|\gamma|)$ \cite{Kadiri}.  By \cref{lem:WeightChoice}(iv) and our choice of $\Delta$, we have
		\begin{align}
		\label{eqn:Artin-isolatePi}
		\Big|\sum_{\chi\in\widehat{H}}\bar{\chi}(C_H)\sum_{|\rho|>\frac{1}{4}}F(-\rho\log x)\Big| &\ll \sum_{\substack{|\rho|>\frac{1}{4} \\ \zeta(\rho)=0}}|F(-\rho\log x)|+\sum_{\substack{|\rho|>\frac{1}{4} \\ L(\rho,\chi_K)=0}}|F(-\rho\log x)|\notag\\
		&\ll \sum_{\substack{|\gamma|>14 \\ \zeta(\beta+i\gamma)=0}}\frac{x^{\beta}\Delta^{-2}}{|\gamma|^3}+\sum_{\substack{|\beta+i\gamma|>\frac{1}{4} \\ L(\beta+i\gamma,\chi_K)=0}}\frac{x^{\beta}\Delta^{-2}}{(|\gamma|+3)^3}\\
		&\ll xe^{\frac{\sqrt{\log x}}{3}}\sum_{\substack{|\gamma|>14 \\ \zeta(\beta+i\gamma)=0}}\frac{x^{-\frac{1}{6\log|\gamma|}}}{|\gamma|^3}+e^{\frac{\eta_K(x)}{2}}\sum_{\substack{|\beta+i\gamma|>\frac{1}{4} \\ L(\beta+i\gamma,\chi_K)=0}}\frac{x^{\beta}}{(|\gamma|+3)^3}.\notag
		\end{align}
		If $t\in\R$ and $M$ is a number field, then there are $\ll_{[M:\Q]}\log(D_M(|t|+3))$ nontrivial zeros $\beta+i\gamma$ of $\zeta_M(s)$ with $|\gamma-t|<1$ \cite[Lemma 5.4]{LO}.  Therefore, by Laplace's method, we find that
		\begin{equation}
		\label{eqn:sum_over_zeta_zeros}
		xe^{\frac{\sqrt{\log x}}{3}}\sum_{\substack{|\gamma|>14 \\ \zeta(\beta+i\gamma)=0}}\frac{x^{-\frac{1}{6\log|\gamma|}}}{|\gamma|^3}\ll xe^{\frac{\sqrt{\log x}}{3}}\int_{14}^{\infty}x^{-\frac{1}{6\log t}}t^{-3}(\log t)dt\ll xe^{-\frac{5}{6}\sqrt{\log x}}.
		\end{equation}
		If $L(\beta+i\gamma,\chi_K)=0$, then $x^{\beta}(|\gamma|+3)^{-1}\leq x e^{-\eta_K(x)}$ and
		\begin{equation}
		\label{eqn:sum_over_zeta_zeros2}
		e^{\frac{\eta_K(x)}{2}}\sum_{\substack{|\beta+i\gamma|>\frac{1}{4} \\ L(\beta+i\gamma,\chi_K)=0}}\frac{x^{\beta}}{(|\gamma|+3)^3}\ll xe^{-\frac{\eta_K(x)}{2}}\sum_{\substack{|\beta+i\gamma|>\frac{1}{4} \\ L(\beta+i\gamma,\chi_K)=0}}\frac{1}{(|\gamma|+3)^2}\ll_{|G|} x e^{-\frac{\eta_K(x)}{2}}\log D_K.
		\end{equation}

We deduce from \eqref{eqn:Artin-ContourShift}-\eqref{eqn:sum_over_zeta_zeros2} that $\psi_2(x)=x/|H|+O(x(e^{-\frac{5}{6}\sqrt{\log x}}+\Delta+e^{-\frac{1}{2}\eta_K(x)}\log D_K))$.  By \eqref{eqn:ReductionToAbelian2}, the same asymptotic holds for $\psi_1(x)$.  In passing from $\psi_1(x)$ to $\pi_C(x,K/\Q)$ in \eqref{eqn:ReductionToAbelian}, we handle the integral on $[\sqrt{x},x]$ using the fact that $\eta_K(y)$ is increasing and $\eta_K(y^{1/2})\geq \frac{1}{2}\eta_K(y)$.
\end{proof}

It remains to produce a suitable bound for $\max\{e^{-\eta_K(x)/4}\log D_K,e^{-\eta_K(x)/8}\}\leq e^{-\eta_K(x)/8}\log D_K$.

\begin{proposition} \label{prop:large-ZFR}
	Let $0<\epsilon<1$ and $Q\geq 3$. For all except $O_{|G|,\epsilon}( \mathfrak{m}_{\mathfrak{F}_G}(Q) Q^{\epsilon})$ number fields $K \in \mathfrak{F}_G(Q)$,  if $x \geq (\log D_K)^{10^9|G|^3/\epsilon}$, then
	\[
	e^{-\frac{\eta_K(x)}{8}}\log  D_K\leq \exp\Big(-\frac{1}{30}\Big(\frac{\log x}{|G|}\Big)^{\frac{1}{2}}\Big).
	\]
\end{proposition}

\begin{proof}
Let $\epsilon_0 = \epsilon/(2|G|^2+2)$ and $\delta = 10^{-9}|G|^{-3}\epsilon$.  For $2 \leq j \leq Q^{\epsilon_0}+1$, we use \cref{thm:ZDE-Artin} with
	\[
	 T =  T_j := e^j -3 \quad \text{and} \quad \sigma = \sigma_j := 1 - \frac{20\delta \log Q}{\log Q +  \log(T_j+3)}.
	 \]
	 By iteratively applying \cref{thm:ZDE-Artin}, we throw  out $O_{|G|,\epsilon}(\mathfrak{m}_{\mathfrak{F}_G}(Q) Q^{ 10^7|G|^3\cdot 20\delta+|G|^2 \epsilon_0})$ exceptions at most $O_{\epsilon}(Q^{\epsilon_0})$ times. This dyadically builds a zero-free region aside from $O_{|G|,\epsilon}(\mathfrak{m}_{\mathfrak{F}_G}(Q) Q^{\epsilon})$ exceptional fields. Thus, for all except at most $O_{|G|,\epsilon}(\mathfrak{m}_{\mathfrak{F}_G}(Q) Q^{\epsilon})$ of the fields $K \in \mathfrak{F}_G(Q)$, the $L$-function $L(s,\chi_K)$ does not vanish in the region
	\[
	\Re(s) > 1 - \frac{20\delta \log Q}{\log Q +  \log(|\Im(s)|+3)},  \qquad |\Im(s)| \leq \exp(Q^{\epsilon_0}).
	\]
	It follows from \cite[Theorem 1.1]{Lee} that $\zeta_{K}(s)$, hence $L(s,\chi_K)$, does not vanish in the region
	 \[
	 \Re(s) > 1 - \frac{1}{13(\log D_K  + [K:\Q]\log(|\Im(s)| +3))}, \qquad |\Im(s)| \geq 1. 
	 \]	
	Note that $|G|=[K:\Q]$ since $K/\Q$ is Galois.  Since $D_K\leq Q$, it follows that for all $K\in\mathfrak{F}_G(Q)$ with $O_{|G|,\epsilon}(\mathfrak{m}_{\mathfrak{F}_G}(Q)Q^{\epsilon})$ exceptions, we have that $L(s,\chi_K)\neq 0$ in the region
	\begin{equation}
	\label{eqn:zfr}
	\re(s)\geq 1-\Delta_K(|\im(s)|+3),\qquad \Delta_{K}(t) = \begin{cases}
 		\frac{20\delta \log D_K}{\log D_K +  \log t } & \mbox{if $3 \leq t \leq \exp(D_K^{\epsilon_0})$,}\\
 		\frac{1}{13(\log D_K + |G|\log t)} & \mbox{if $t > \exp(D_K^{\epsilon_0})$.}
	 \end{cases}
	\end{equation}
	
	Let $K\in\mathfrak{F}_G(Q)$ be such that $L(s,\chi_K)\neq 0$ in the region \eqref{eqn:zfr} and $D_K$ is sufficiently large with respect to $|G|$ and $\epsilon$. By the change of variables $t=e^u$, it follows that if $x \geq 3$, then
	\begin{equation}
	\eta_{K}(x) \geq \min\Big\{ \inf_{u \geq D_K^{\epsilon_0}} \phi_1(u,x),  \inf_{0 \leq u \leq D_K^{\epsilon_0}} \phi_2(u,x) \Big\},
	\label{eqn:large-ZFR-eta}
	\end{equation}
	where
	\begin{equation*}
	\begin{aligned}
	\phi_1(u,x)	= \frac{\log x}{13(\log D_K + |G| u)}  + u,
	\qquad
	\phi_2(u,x) = \frac{20\delta (\log D_K)(\log x)}{\log D_K+ u} + u.
	\end{aligned}
	\end{equation*}
	
	The global minimum of $\phi_1(u,x)$ for $u>-\frac{1}{|G|}\log D_{K}$ is attained at
	\[
	u=u_1 :=  \Big(\frac{\log x}{13|G|}\Big)^{1/2} - \frac{\log D_{K}}{|G|}.
	\]
	Thus, $\phi_1(u,x)$ is minimized for $u\in [D_{K}^{\epsilon_0},\infty)$ at $u = \max\{ u_1, D_{K}^{\epsilon_0}\}$. If $u_1 \geq D_K^{\epsilon_0}$, then
	\[
	\phi_1(u_1,x)=\Big(\frac{\log x}{13|G|}\Big)^{\frac{1}{2}}+u_1\geq \Big(\frac{\log x}{13|G|}\Big)^{\frac{1}{2}}+D_K^{\epsilon_0}.
	\]
	If $u_1 < D_K^{\epsilon_0}$ and $D_K$ is sufficiently large with respect to $|G|$ and $\epsilon$, then
	\[
	\phi_1(D_K^{\epsilon_0},x) > D_K^{\epsilon_0} > \frac{199}{200}u_1+\frac{D_K^{\epsilon_0}}{200}=\frac{199}{200}\Big(\frac{\log x}{13|G|}\Big)^{\frac{1}{2}}-\frac{199\log D_K}{200|G|}+\frac{D_K^{\epsilon_0}}{200}>\frac{199}{200}\Big(\frac{\log x}{13|G|}\Big)^{\frac{1}{2}}+\frac{D_K^{\epsilon_0}}{400}.
	\] 
	Combining these observations, we find that for $x\geq 3$,
	\begin{equation}
	\inf_{u \geq D_{K}^{\epsilon_0}} \phi_1(u,x) >\frac{199}{200}\Big(\frac{\log x}{13|G|}\Big)^{\frac{1}{2}}+\frac{D_K^{\epsilon_0}}{400}.
	\label{eqn:large-ZFR-phi1}
	\end{equation}
	Similarly, the global minimum of $\phi_2(u,x)$ for $u\in(-\log D_{K},\infty)$ is attained at
	\[
	u = u_2 :=  (20\delta (\log D_{K}) \log x)^{\frac{1}{2}} - \log D_{K}.
	\]
	For $u\in[0,D_{K}^{\epsilon_0}]$, the function $\phi_2(u,x)$ achieves its minimum at $u = 0, u_2,$ or $D_{K}^{\epsilon_0}$. Observe that $\phi_2(0,x) = 20\delta \log x$. Also, note that $u_2 \geq 0$ if and only if $\log x \geq \frac{\log D_{K}}{20\delta}$, in which case
	\[
	\phi_2(u_2,x) = 2(20\delta(\log D_K)\log x)^{\frac{1}{2}}-\log D_K \geq  (20\delta (\log D_{K}) \log x)^{\frac{1}{2}}.
	\]
	If $u_2  \geq 0$, then $\phi_2(u_2,x) \leq \phi_2(D_K^{\epsilon_0},x)$ since $u_2$ is the global minimum. Thus, if $x \geq 3$, then
	\begin{equation}
		\inf_{0 \leq u \leq D_{K}^{\epsilon_0}} \phi_2(u,x)  
		\geq \min\{20\delta \log x , (20\delta (\log D_{K}) \log x)^{\frac{1}{2}}\}.
	\label{eqn:large-ZFR-phi2}
	\end{equation}
	We deduce from \eqref{eqn:large-ZFR-eta}, \eqref{eqn:large-ZFR-phi1}, and \eqref{eqn:large-ZFR-phi2} that if $D_K$ is large with respect to $|G|$ and $\epsilon$, then
	\[
	e^{-\frac{\eta_K(x)}{8}}\log D_K\leq \exp\Big(-\frac{1}{8}\min\Big\{\frac{199}{200}\Big(\frac{\log x}{13|G|}\Big)^{\frac{1}{2}}+\frac{D_K^{\epsilon_0}}{400},20\delta\log x,(20\delta(\log D_K)\log x)^{\frac{1}{2}}\Big\}\Big)\log D_K.
	\]
	If we additionally assume that $x\geq (\log D_K)^{1/\delta}$, then the  proposition follows.
	\end{proof}

\bibliographystyle{abbrv}
\bibliography{ZDE-DedekindZeta_REVISION}

\end{document}